\numberwithin{equation}{section}
\numberwithin{figure}{section}
  \theoremstyle{remark}
  \newtheorem*{rem*}{\protect\remarkname}
\theoremstyle{plain}
\newtheorem{thm}{\protect\theoremname}
  \theoremstyle{plain}
  \newtheorem{lem}[thm]{\protect\lemmaname}
  \theoremstyle{plain}
  \newtheorem{cor}[thm]{\protect\corollaryname}
  \theoremstyle{plain}
  \newtheorem{prop}[thm]{\protect\propositionname}
\tikzset{square/.style={draw,rectangle,fill=blue!5}}
\tikzset{maybesquare/.style={draw,regular polygon,regular polygon sides=8,fill=green!10}}
\tikzset{nonsquare/.style={draw,circle}} 
\tikzset{root/.style={}}
  \providecommand{\corollaryname}{Corollary}
  \providecommand{\lemmaname}{Lemma}
  \providecommand{\propositionname}{Proposition}
  \providecommand{\remarkname}{Remark}
\providecommand{\theoremname}{Theorem}
\begin{document}

\title{Morphic Words and nested recurrence relations}

\author{Marcel Celaya and Frank Ruskey}

\address{Dept. of Mathematics and Statistics, McGill University, CANADA}

\email{marcel.celaya@mail.mcgill.ca}

\address{Dept. of Computer Science, University of Victoria, CANADA}

\email{ruskey@cs.uvic.ca}

\date{January 9, 2012}
\begin{abstract}
We explore a family of nested recurrence relations with arbitrary
levels of nesting, which have an interpretation in terms of fixed
points of morphisms over a countably infinite alphabet. Recurrences
in this family are related to a number of well-known sequences, including
Hofstadter's $G$ sequence and the Conolly and Tanny sequences. For
a recurrence $a\left(n\right)$ in this family with only finitely
terms, we provide necessary and sufficient conditions for the limit
$a\left(n\right)/n$ to exist.
\end{abstract}
\maketitle

\section{Introduction}

\global\long\def\symbol#1{\mathtt{#1}}
\global\long\def\intsymbol#1{\left[#1\right]}
\global\long\def\syms{\symbol s}
\global\long\def\iversonian#1{[\![#1]\!]}
\global\long\def\tree#1{\mathcal{#1}}
When Hofstadter described his $G$ sequence in \cite{hofstadter_douglas_r._godel_1979},
defined to be $g\left(0\right)=0$ and $g\left(n\right)=n-g\left(g\left(n-1\right)\right)$
for $n\geq1$, he mentions his discovery of a curious interpretation
of $g\left(n\right)$ in terms of an infinite rooted tree $\tree T$.
Starting with disconnected nodes labelled $1,2,3,\ldots$, The tree
$\tree T$ is constructed step-by-step as follows: step 1 places node
1 as the root and on step $n>1$, node $n$ is attached to $\tree T$
as the right-most child of node $g\left(n\right)$. Hofstadter notes
that $\tree T$ has a very interesting structure; for instance, the
number of nodes at each depth is determined by the Fibonacci sequence.
A proof of this interpretation has recently been given by Mustazee
Rahman in \cite{rahman_combinatorial_2011}.

Such a tree interpretation has been used successfully to shed light
on the behaviour of various other nested recurrences as well. In \cite{kubo_conways_1996},
Kubo and Vakil provide an elegant recursive decomposition of the tree
$\tree T$ representing the Hofstadter-Conway sequence
\[
c\left(n\right)=c\left(n-c\left(n-1\right)\right)+c\left(c\left(n-1\right)\right)\mbox{; }c\left(1\right)=c\left(2\right)=1,
\]
and use it to prove a number of interesting theorems about $c\left(n\right)$.
Also of relevance is Golomb's self describing sequence, which is the
unique increasing sequence $b\left(n\right)$ for which $b\left(1\right)=1$
and every $n\geq1$ appears $b\left(n\right)$ times. The interpretation
for $\tree T$ in the case of $b\left(n\right)$ is inherent in the
definition of $b\left(n\right)$; in $\tree T$, every child of node
$n\geq2$ has $n$ children (node 2 is a child of itself). The nested
recurrence for $b\left(n\right)$, due to Colin Mallows \cite{graham_concrete_1994},
is 
\[
b\left(n\right)=b\left(n-b\left(b\left(n-1\right)\right)\right)+1\mbox{; }b\left(1\right)=1\mbox{.}
\]
Interestingly, the discovery of this recurrence came after $b\left(n\right)$
was introduced in \cite{golomb_problem_1966}.

Hofstadter's $G$ sequence, however, serves as a canonical example
of the kind of sequences we explore in this paper. The tree $\tree T$
arising from $g\left(n\right)$, which appears as the right subtree
of the tree in Figure \ref{fig:FibT}, has a very specific structure.
In $\tree T$, every square node has two children: a square node followed
by a circle node. On the other hand, every circle node has only one
child, a square node. This suggests that $\tree T$ can be completely
described by a simple morphism, namely $\symbol 1\rightarrow\symbol{10}$
and $\symbol{0\rightarrow1}$, with $\symbol 1$'s ($\symbol 0$'s)
representing the square (circle) nodes. This is the morphism whose
unique fixed point is the Fibonacci word, a fact which is hardly a
coincidence. In this paper, we identify a family of nested recursions
which admit a similar kind of ``morphic'' tree interpretation.

\begin{figure}
\tikzset{edge from parent/.style={draw, edge from parent path={(\tikzparentnode) -- (\tikzchildnode)}}}   
\tikzset{level distance=23pt}
\begin{tikzpicture}[nodes={inner sep=0pt,minimum height=12pt,minimum width=12pt,font=\scriptsize},sibling distance=3pt]

\Tree [.\node[root]{$\symbol{r}$};
          [.\node[root]{$\symbol{r}$};
              [.\node[root]{$\symbol{r}$};
                  [.\node[root]{$\symbol{r}$};
                      [.\node[root]{$\symbol{r}$};
                          [.\node[root]{$\symbol{r}$};
                              [.\node[root]{$\symbol{r}$}; ]
                              [.\node[nonsquare]{$\symbol{0}$}; ] ]
                          [.\node[nonsquare]{$\symbol{0}$};
                              [.\node[square]{$\symbol{1}$}; ] ] ]
                      [.\node[nonsquare]{$\symbol{0}$};
                          [.\node[square]{$\symbol{1}$};
                              [.\node[square]{$\symbol{2}$}; ]
                              [.\node[nonsquare]{$\symbol{0}$}; ] ] ] ]
                  [.\node[nonsquare]{$\symbol{0}$};
                      [.\node[square]{$\symbol{1}$};
                          [.\node[square]{$\symbol{2}$};
                              [.\node[square]{$\symbol{3}$}; ]
                              [.\node[nonsquare]{$\symbol{0}$}; ] ]
                          [.\node[nonsquare]{$\symbol{0}$};
                              [.\node[square]{$\symbol{1}$}; ] ] ] ] ]
              [.\node[nonsquare]{$\symbol{0}$};
                  [.\node[square]{$\symbol{1}$};
                      [.\node[square]{$\symbol{2}$};
                          [.\node[square]{$\symbol{3}$};
                              [.\node[square]{$\symbol{4}$}; ]
                              [.\node[nonsquare]{$\symbol{0}$}; ] ]
                          [.\node[nonsquare]{$\symbol{0}$};
                              [.\node[square]{$\symbol{1}$}; ] ] ]
                      [.\node[nonsquare]{$\symbol{0}$};
                          [.\node[square]{$\symbol{1}$};
                              [.\node[square]{$\symbol{2}$}; ]
                              [.\node[nonsquare]{$\symbol{0}$}; ] ] ] ] ] ]
          [.\node[nonsquare]{$\symbol{0}$};
              [.\node[square]{$\symbol{1}$};
                  [.\node[square]{$\symbol{2}$};
                      [.\node[square]{$\symbol{3}$};
                          [.\node[square]{$\symbol{4}$};
                              \node[square]{$\symbol{5}$}; 
                              \node[nonsquare]{$\symbol{0}$}; ]
                          [.\node[nonsquare]{$\symbol{0}$};
                              \node[square]{$\symbol{1}$}; ] ]
                      [.\node[nonsquare]{$\symbol{0}$};
                          [.\node[square]{$\symbol{1}$};
                              \node[square]{$\symbol{2}$}; 
                              \node[nonsquare]{$\symbol{0}$}; ] ] ]
                  [.\node[nonsquare]{$\symbol{0}$};
                      [.\node[square]{$\symbol{1}$};
                          [.\node[square]{$\symbol{2}$};
                              \node[square]{$\symbol{3}$};
                              \node[nonsquare]{$\symbol{0}$};  ]
                          [.\node[nonsquare]{$\symbol{0}$};
                              \node[square]{$\symbol{1}$}; ] ] ] ] ] ]
\end{tikzpicture}\caption{\label{fig:FibT}The tree $\tree T$ representing $a\left(n\right)=n-1-a\left(a\left(n-1\right)\right)$,
which is a translation of Hofstadter's $G$ sequence (A005206) \cite{oeis}.}
\end{figure}

\subsection{A note on notation.}

We adopt a number of conventions in this paper. For a proposition
$P$, we define $\iversonian P$ to be 1 when $P$ is true and 0 when
$P$ is false. For a given sequence $a\left(n\right)$, we let $a^{k}\left(n\right)$
denote $k$-fold composition and $\nabla a\left(n\right)$ denote
the backward difference $a\left(n\right)-a\left(n-1\right)$. For
a symbol $\syms$, we write $\syms^{k}$ to mean the word $\syms\syms\ldots\syms$
of length $k$. We write $\left|\symbol W\right|$ to denote the number
of symbols in the word $\symbol W$, and $\left|\symbol W\right|_{\syms}$
to denote the number of occurrences of the symbol $\syms$ in the
word $\symbol W$. To avoid ambiguity, we distinguish symbols $\symbol 0,\symbol 1,\symbol 2,\ldots$
from nonnegative integers by using square brackets; $\intsymbol{2^{3}}$
is the symbol $\mathtt{8}$ while $\intsymbol 2^{3}$ is the word
$\mathtt{222}$.

\section{Main Theorem}

Let $\Sigma,\Pi$ be alphabets (sets of letters). The set of all finite
words with letters from $\Sigma$, including the empty word $\epsilon$,
is denoted by $\Sigma^{*}$. A \emph{morphism} $\sigma:\Sigma^{*}\rightarrow\Pi^{*}$
is a function such that if $\symbol W_{1},\symbol W_{2}$ are words
in $\Sigma^{*}$, then $\sigma\left(\symbol W_{1}\symbol W_{2}\right)=\sigma\left(\symbol W_{1}\right)\sigma\left(\symbol W_{2}\right)$
and $\sigma\left(\epsilon\right)=\epsilon$. Any such morphism $\sigma$
is uniquely determined by it's action on the individual letters of
$\Sigma$. Hence, to define $\sigma$, it is sufficient to specify
$\sigma\left(\symbol x\right)$ for each letter $\symbol x\in\Sigma$. 

Let $R=\left\langle s,r_{1},r_{2},r_{3},\ldots\right\rangle $ be
a sequence of nonnegative integers with $s\geq1$. Let $\sigma_{R}$
be the following morphism on the infinite alphabet $\Sigma=\left\{ \symbol r,\symbol 0,\symbol 1,\symbol 2,\ldots\right\} $:
\begin{align*}
\symbol r & \rightarrow\symbol{r0}^{s}\\
\intsymbol j & \rightarrow\intsymbol{j+1}\symbol 0^{r_{j+1}};\quad j\geq0.
\end{align*}
Figure \ref{fig:FibT} above depicts this morphism when $R=\left\langle 1,0,1,1,1,\ldots\right\rangle $.

Let $\tree T$ be a rooted tree in which each node is labelled with
a symbol in $\Sigma$. If $V=\left\{ v_{1},\ldots,v_{k}\right\} $
is a contiguous collection of nodes all on the same row in $\tree T$,
then we say that $V$ \emph{spells} the word $\mbox{\ensuremath{\symbol w}}_{1}\ldots\symbol w_{k}\in\Sigma^{*}$
if the label of node $v_{i}$ is $\symbol w_{i}$ for $1\leq i\leq k$.

Consider the infinite rooted tree $\tree T_{R}$ in which the root
gets labelled $\symbol r$, and the children of each node labelled
$\symbol x\in\Sigma$ spell the word $\sigma_{R}\left(\symbol x\right)$.
If $v$ is a node in $\tree T_{R}$, let $\ell\left(v\right)$ count
the number of nodes to the left of $v$. Now define $a_{R}:\mathbb{Z}\rightarrow\mathbb{Z}_{\geq0}$
to be the unique function that satisfies $a_{R}\left(n\right)=0$
for $n<0$ and
\[
a_{R}\left(\ell\left(v\right)\right)=\ell\left(\mbox{parent}\left(v\right)\right)
\]
for any node $v$ in $\tree T_{R}$ which is not the root. Put another
way, if there are $n$ nodes sitting to the left of some node $v$
in $\tree T_{R}$, then $a_{R}\left(n\right)$ counts how many nodes
are sitting to the left of the parent of $v$. This quantity doesn't
depend on $v$, a fact we prove in the following lemma.
\begin{rem*}
When the sequence $R$ is understood in context, the subscripts in
$a_{R}$, $\sigma_{R}$ and $\tree T_{R}$ are omitted as is done
below.\end{rem*}
\begin{lem}
\label{lem:an-is-well-defined.}The function $a\left(n\right)$ is
well-defined.\end{lem}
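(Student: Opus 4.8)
The plan is to exploit the self-similar layout of the rows of $\tree T$. My first step is to identify the word spelled by each row. Reading the labels of the nodes at depth $d$ from left to right yields exactly $\sigma^{d}\left(\symbol r\right)$, the $d$-fold image of the root symbol: this follows by induction on $d$, since by the defining property of $\tree T$ the children of the nodes spelling a word $\symbol W$ spell $\sigma\left(\symbol W\right)$, and $\sigma$ is a morphism. Because $\sigma\left(\symbol r\right)=\symbol{r0}^{s}$ begins with $\symbol r$ and has length $s+1\geq2$, each $\sigma^{d}\left(\symbol r\right)$ is a proper prefix of $\sigma^{d+1}\left(\symbol r\right)$, so all the rows are nested prefixes of a single infinite word $\symbol W=\symbol x_{0}\symbol x_{1}\symbol x_{2}\cdots$, the fixed point of $\sigma$. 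In particular the symbol occupying position $n$ in row $d$ is $\symbol x_{n}$ whenever row $d$ is long enough to reach position $n$, and this is independent of $d$.

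Here $\ell\left(v\right)$ is the position of $v$ within its own row, i.e. the number of nodes at the same depth lying to its left, so within a single row $\ell$ is injective and the only possible failure of well-definedness is a clash between nodes at different depths that happen to share a position $n$. To rule this out I will describe $\ell\left(\mbox{parent}\left(v\right)\right)$ by a formula that refers only to $\symbol W$. The nodes of row $d$ split into consecutive blocks, the $k$-th block consisting of the children of the $k$-th node of row $d-1$ and having length $\left|\sigma\left(\symbol x_{k}\right)\right|$; the parent of the node at position $n$ is the node whose block contains position $n$. Thus I define $p\left(n\right)$ to be the unique index $i$ with $\sum_{k<i}\left|\sigma\left(\symbol x_{k}\right)\right|\leq n<\sum_{k\leq i}\left|\sigma\left(\symbol x_{k}\right)\right|$. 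This partition of positions depends only on the length sequence $\left|\sigma\left(\symbol x_{k}\right)\right|$ read off from the fixed point $\symbol W$, hence not on $d$.

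Since $\left|\sigma\left(\symbol r\right)\right|=s+1\geq2$ and $\left|\sigma\left(\intsymbol j\right)\right|=r_{j+1}+1\geq1$, every block has length at least $1$; therefore the partial sums $\sum_{k\leq i}\left|\sigma\left(\symbol x_{k}\right)\right|$ are strictly increasing and tend to infinity, so $p\left(n\right)$ is defined for every $n\geq0$. The key step is then to check that $\ell\left(\mbox{parent}\left(v\right)\right)=p\left(\ell\left(v\right)\right)$ for every non-root $v$: if $v$ lies at position $n$ in row $d\geq1$, then row $d-1$ spells the prefix $\symbol x_{0}\cdots\symbol x_{m-1}$ of $\symbol W$, its children lay out row $d$ block by block exactly as above, and the inequality $n<\left|\sigma^{d}\left(\symbol r\right)\right|$ forces $p\left(n\right)\leq m-1$, so the parent of $v$ genuinely sits at position $p\left(n\right)$ of row $d-1$. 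As $p$ never mentions $d$, all non-root nodes sharing a position $n$ have parents in the same position, and because the row lengths grow without bound every $n\geq0$ is the position of some non-root node. Setting $a\left(n\right)=p\left(n\right)$ for $n\geq0$ and $a\left(n\right)=0$ for $n<0$ therefore yields the unique function with the required properties.

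The main obstacle I anticipate is not any single computation but pinning down the nested-prefix structure cleanly and then bookkeeping the block boundaries so that the parent-position map is manifestly row-independent; once the fixed-point word $\symbol W$ is in hand, the remaining verifications are routine index chasing.
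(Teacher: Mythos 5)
Your proof is correct and follows essentially the same route as the paper's: both arguments identify the rows as nested prefixes of the fixed point $\sigma^{\infty}\left(\symbol r\right)$ and show that the parent's position is computed from that word alone (your block-boundary condition $\sum_{k<i}\left|\sigma\left(\symbol x_{k}\right)\right|\leq n<\sum_{k\leq i}\left|\sigma\left(\symbol x_{k}\right)\right|$ is exactly the paper's ``smallest prefix $\symbol R$ with $\left|\sigma\left(\symbol R\right)\right|\geq\ell\left(v\right)+1$''), with nonerasingness and unbounded row growth supplying existence. Your version just makes the index bookkeeping more explicit.
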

\begin{proof}
The $k$\textsuperscript{th} row in $\tree T$ spells the word $\sigma^{k}\left(\symbol r\right)$
for any $k\geq0$. Moreover, $\sigma$ is \emph{nonerasing}, that
is, $\sigma\left(\symbol x\right)\neq\epsilon$ for all $\symbol x\in\Sigma$.
We also have $\symbol r\in\sigma^{k}\left(\symbol r\right)$ for any
$k$, and $\left|\sigma\left(\symbol r\right)\right|\geq2$. Thus
$n\mapsto\left|\sigma^{n}\left(\symbol r\right)\right|$ is a strictly
increasing function, which means the size of the rows in $\tree T$
is unbounded. In particular, for any $n\geq0$ there exists some node
$v$ such that $\ell\left(v\right)=n$.

We have that $\sigma$ is nonerasing and $\sigma\left(\symbol r\right)=\symbol{rZ}$
for some nonempty word $\symbol Z$, and so $\sigma$ is said to be
\emph{prolongable on }$\symbol r$ \cite[p. 10]{allouche_automatic_2003}.
Hence $\sigma^{\infty}\left(\symbol r\right)$ is a well-defined right-infinite
word. Moreover, the nodes in each row in $\tree T$ collectively spell
out a prefix of $\symbol{\sigma^{\infty}\left(\symbol r\right)}$.
Now suppose $v$ is a non-root node. Consider the word $\symbol P$
spelled by all nodes to the left of and including the parent of $v$.
This $\symbol P$ is the smallest prefix $\symbol R$ of $\sigma^{\infty}\left(\symbol r\right)$
of such that $\left|\sigma\left(\symbol R\right)\right|\geq\ell\left(v\right)+1$.
Thus, $\ell\left(\mbox{parent}\left(v\right)\right)=\left|\symbol P\right|-1$.
This quantity does not depend on $v$, merely $\ell\left(v\right)$.
\end{proof}
Throughout this paper, the word $\sigma^{\infty}\left(\symbol r\right)$
appearing in Lemma \ref{lem:an-is-well-defined.} will be very important
in our analysis of the sequence $a\left(n\right)$. Let $\symbol W$
be the right-infinite word which satisfies $\symbol{rW}=\sigma^{\infty}\left(\symbol r\right)$.
Equivalently, write $\symbol W=\symbol r^{-1}\sigma^{\infty}\left(\symbol r\right)$.
We now give two alternative but related interpretations of $a\left(n\right)$
in terms of $\symbol W$. The first says that $a\left(n\right)$ counts
the non-zero symbols in the length-$n$ prefix of $\symbol W$. The
second, which follows immediately from the first, says that the first-difference
sequence $\left\{ \nabla a\left(n\right)\right\} _{n\geq1}$ is the
binary sequence obtained by replacing all nonzero symbols in $\symbol W$
with 1.
\begin{lem}
\label{lem:Prefix}Let $\symbol W_{n}$ be the prefix of $\symbol r^{-1}\sigma^{\infty}\left(\symbol r\right)$
of length $n$. For $n\geq0$, 
\[
\left|\symbol W_{n}\right|_{\symbol 0}=n-a\left(n\right).
\]
\end{lem}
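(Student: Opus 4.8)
The plan is to exploit one structural feature of $\sigma$ together with the parent characterization established in the proof of Lemma~\ref{lem:an-is-well-defined.}. Observe that for every letter $\symbol x\in\Sigma$, the word $\sigma\left(\symbol x\right)$ begins with a non-zero symbol and is otherwise composed entirely of $\symbol 0$'s: indeed $\symbol r\mapsto\symbol{r0}^{s}$ and $\intsymbol j\mapsto\intsymbol{j+1}\symbol 0^{r_{j+1}}$, and in both cases the leading symbol ($\symbol r$, respectively $\intsymbol{j+1}$) is non-zero. Writing $\sigma^{\infty}\left(\symbol r\right)=\symbol w_{0}\symbol w_{1}\symbol w_{2}\cdots$ with $\symbol w_{0}=\symbol r$, applying $\sigma$ letter by letter partitions the fixed point into consecutive blocks $\sigma\left(\symbol w_{0}\right),\sigma\left(\symbol w_{1}\right),\ldots$, and the non-zero symbols of $\sigma^{\infty}\left(\symbol r\right)$ are exactly the first symbol of each such block. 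Let $b_{i}=\sum_{j=0}^{i}\left|\sigma\left(\symbol w_{j}\right)\right|$ and $b_{-1}=0$, so that block $i$ occupies positions $b_{i-1},\ldots,b_{i}-1$; since $\sigma$ is nonerasing, the sequence $b_{-1}<b_{0}<b_{1}<\cdots$ is strictly increasing.

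Next I would recast $a\left(n\right)$ in this block language. The proof of Lemma~\ref{lem:an-is-well-defined.} gives $a\left(n\right)=\left|\symbol P\right|-1$, where $\symbol P$ is the shortest prefix of $\sigma^{\infty}\left(\symbol r\right)$ with $\left|\sigma\left(\symbol P\right)\right|\geq n+1$. If $\symbol P=\symbol w_{0}\cdots\symbol w_{i}$ then $\left|\sigma\left(\symbol P\right)\right|=b_{i}$, and minimality of $\symbol P$ forces $b_{i-1}\leq n<b_{i}$; hence $a\left(n\right)=i$ is precisely the index of the block containing position $n$.

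Finally I would count. The prefix $\symbol W_{n}$ consists of the symbols at positions $1,\ldots,n$ of $\sigma^{\infty}\left(\symbol r\right)$, so the number of non-zero symbols it contains equals the number of block-initial positions lying in $\left\{1,\ldots,n\right\}$. Block $j$ begins at $b_{j-1}$, and because the starting positions strictly increase with $b_{0}=s+1\geq 2$, the block starts in $\left\{1,\ldots,n\right\}$ are exactly those with $1\leq j\leq a\left(n\right)$, of which there are $a\left(n\right)$. Thus $\symbol W_{n}$ contains $a\left(n\right)$ non-zero symbols, and as $\left|\symbol W_{n}\right|=n$ we conclude $\left|\symbol W_{n}\right|_{\symbol 0}=n-a\left(n\right)$. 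The one point demanding care---and the likeliest source of an off-by-one error---is the left endpoint: the block start $b_{-1}=0$ carries the unique $\symbol r$ and must be excluded since $\symbol W_{n}$ starts at position $1$, so one must verify $b_{0}\geq1$ to be sure this exclusion removes neither too much nor too little.
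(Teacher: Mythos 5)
Your proof is correct and takes essentially the same route as the paper's: your block decomposition $\sigma^{\infty}\left(\symbol r\right)=\sigma\left(\symbol w_{0}\right)\sigma\left(\symbol w_{1}\right)\cdots$ is the word-level shadow of the tree's grouping of each row into sibling families, your block-initial positions are exactly the paper's unique non-$\symbol 0$-labelled left-most child of each node, and your identification of $a\left(n\right)$ with the index of the block containing position $n$ is the characterization $a\left(n\right)=\left|\symbol P\right|-1$ already extracted in the proof of Lemma \ref{lem:an-is-well-defined.}. The bookkeeping at position $0$ (excluding the initial $\symbol r$, checking $b_{0}=s+1\geq2$) is handled correctly, so there is nothing to fix.
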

\begin{proof}
Pick a non-root node $v$ in $\tree T$ such that $\ell\left(v\right)=n$.
By definition of $\sigma$, every node to the left of and including
$v^{1}:=\mbox{parent}\left(v\right)$ has exactly one child which
is not labelled $\symbol 0$. Moreover, each such child is always
the left-most node among its siblings. There is therefore a 1-1 correspondence
between the nodes to the left of and including $v^{1}$, and the nodes
to the left of and including $v$ \emph{that are not labelled} $\symbol 0$.
The number of symbols not labelled $\symbol 0$ in $\symbol{rW}_{n}$
is therefore $\ell\left(v^{1}\right)+1$. Hence,
\[
\left|\symbol W_{n}\right|_{\symbol 0}=\left|\symbol{rW}_{n}\right|_{\symbol 0}=n+1-\left(\ell\left(v^{1}\right)+1\right)=n-a\left(\ell\left(v\right)\right)=n-a\left(n\right).
\]

\end{proof}
A \emph{coding} is a morphism $\beta:\Sigma^{*}\rightarrow\Pi^{*}$
such that $\left|\beta\left(\symbol s\right)\right|=1$ for all $\symbol s\in\Sigma$.
In the following lemma, define $\beta:\Sigma^{*}\rightarrow\left\{ \symbol 0,\symbol 1\right\} ^{*}$
to be the coding $\symbol s\mapsto\left[\iversonian{\symbol s\neq\symbol 0}\right]$.
\begin{lem}
\label{lem:binary_coding}Let $\symbol B=\symbol b_{1}\symbol b_{2}\symbol b_{3}\ldots=\beta\left(\symbol r^{-1}\sigma^{\infty}\left(\symbol r\right)\right)$
For all $n\geq1$,
\[
\symbol b_{n}=\left[\nabla a\left(n\right)\right].
\]
\end{lem}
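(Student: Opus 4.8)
The plan is to derive this directly from Lemma \ref{lem:Prefix} by a running-count argument. Since $\beta$ is a coding it acts letter-by-letter, so the $n$-th symbol $\symbol b_n$ of $\symbol B$ is $\symbol 1$ precisely when the $n$-th symbol of $\symbol r^{-1}\sigma^{\infty}\left(\symbol r\right)$ is nonzero; the whole lemma amounts to translating ``number of nonzero symbols in a prefix'' (controlled by Lemma \ref{lem:Prefix}) into ``the $n$-th symbol is nonzero'' via a backward difference.

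First I would record that, because $\beta$ preserves length, $\beta\left(\symbol W_{n}\right)$ is exactly the length-$n$ prefix $\symbol b_{1}\cdots\symbol b_{n}$ of $\symbol B$, and $\left|\symbol b_{1}\cdots\symbol b_{n}\right|_{\symbol 1}$ equals the number of nonzero symbols occurring in $\symbol W_{n}$. By Lemma \ref{lem:Prefix} this count is $\left|\symbol b_{1}\cdots\symbol b_{n}\right|_{\symbol 1}=\left|\symbol W_{n}\right|-\left|\symbol W_{n}\right|_{\symbol 0}=n-\left(n-a\left(n\right)\right)=a\left(n\right)$, valid for every $n\geq0$; the boundary case $n=0$ uses $\symbol W_{0}=\epsilon$ together with $a\left(0\right)=0$, which itself follows from Lemma \ref{lem:Prefix} applied at $n=0$.

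Then I would compare the prefixes of lengths $n-1$ and $n$. The symbol $\symbol b_{n}$ is $\symbol 1$ if and only if the running count of $\symbol 1$'s strictly increases from position $n-1$ to position $n$, so the integer value of $\symbol b_{n}$ equals $\left|\symbol b_{1}\cdots\symbol b_{n}\right|_{\symbol 1}-\left|\symbol b_{1}\cdots\symbol b_{n-1}\right|_{\symbol 1}=a\left(n\right)-a\left(n-1\right)=\nabla a\left(n\right)$. As appending a single letter changes the count by at most one, we have $\nabla a\left(n\right)\in\left\{ 0,1\right\} $, and hence $\symbol b_{n}=\intsymbol{\nabla a\left(n\right)}$, as claimed.

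There is essentially no obstacle here: the statement is an immediate corollary of Lemma \ref{lem:Prefix}, exactly as anticipated in the remark preceding it. The only points deserving a line of care are the boundary value $a\left(0\right)=0$, which is what makes the $n=1$ instance work, and the observation that the backward difference $\nabla a\left(n\right)$ lands in $\left\{ 0,1\right\} $ so that it genuinely coincides with the indicator symbol produced by the coding $\beta$.
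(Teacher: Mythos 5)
Your proof is correct and follows essentially the same route as the paper: both arguments take the backward difference of the identity in Lemma \ref{lem:Prefix}, the only cosmetic difference being that you count $\symbol 1$'s in the coded prefix while the paper counts $\symbol 0$'s in $\symbol W_{n}$ directly. The boundary observation $a\left(0\right)=0$ and the remark that $\nabla a\left(n\right)\in\left\{0,1\right\}$ are both sound and match what the paper implicitly uses.
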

\begin{proof}
Let $\symbol W_{n}=\symbol w_{1}\ldots\symbol w_{n}$ be as in Lemma
\ref{lem:Prefix}. Then,
\begin{align*}
\nabla a\left(n\right)=a\left(n\right)-a\left(n-1\right) & =1-\left(\left|\symbol W_{n}\right|_{\symbol 0}-\left|\symbol W_{n-1}\right|_{\symbol 0}\right)\\
 & =1-\iversonian{\symbol w_{n}=\symbol 0}\\
 & =\iversonian{\symbol w_{n}\neq\symbol 0}.
\end{align*}
Hence
\[
\left[\nabla a\left(n\right)\right]=\left[\iversonian{\symbol w_{n}\neq\symbol 0}\right]=\beta\left(\symbol w_{n}\right)=\symbol b_{n}.
\]

\end{proof}
The following theorem is the main result of this paper. This theorem,
as well as its proof, makes use of the following two doubly-indexed
quantities:
\begin{align*}
r_{i,j} & :=\iversonian{r_{i}\geq j}\\
c_{i,j} & :=r_{i,j}-r_{i-1,j}\mbox{.}
\end{align*}
Here, and throughout the rest of the paper, we define $r_{0}=-1$.
\begin{thm}
\label{thm:main}For $n<s$, $a\left(n\right)=0$ and for $n\geq s$,
\begin{equation}
a\left(n\right)=n-s-\sum_{i,j\geq1}c_{i,j}a^{i}\left(n-j\right).\label{eq:main_thm}
\end{equation}

\end{thm}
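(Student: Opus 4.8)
The plan is to reduce the claimed identity to a statement about zeros in the word $\symbol W=\symbol r^{-1}\sigma^{\infty}\left(\symbol r\right)$ and prove it using the self-similarity of $\symbol W$. The fixed-point identity $\symbol{rW}=\sigma\left(\symbol{rW}\right)=\sigma\left(\symbol r\right)\sigma\left(\symbol W\right)=\symbol{r0}^{s}\sigma\left(\symbol W\right)$ gives, after cancelling the leading $\symbol r$, the decomposition $\symbol W=\symbol 0^{s}\sigma\left(\symbol W\right)$; in particular $\symbol W$ begins with $\symbol 0^{s}$. For $n<s$ the length-$n$ prefix is then all zeros, so $a\left(n\right)=0$ by Lemma \ref{lem:Prefix}. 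For $n\geq s$, Lemma \ref{lem:Prefix} yields $n-s-a\left(n\right)=\left|\symbol W_{n}\right|_{\symbol 0}-s$, and since the first $s$ symbols are zeros this is exactly the number of $\symbol 0$'s occurring among positions $s+1,\ldots,n$ of $\symbol W$. Thus it suffices to show that the sum in (\ref{eq:main_thm}) counts precisely these zeros.

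First I would rewrite the sum. Because $c_{i,j}=r_{i,j}-r_{i-1,j}$ and $r_{0,j}=\iversonian{-1\geq j}=0$, the substitution $i\mapsto i+1$ in the part coming from $r_{i-1,j}$ collapses the double sum to
\[
\sum_{i,j\geq1}c_{i,j}a^{i}\left(n-j\right)=\sum_{i\geq1}\sum_{j=1}^{r_{i}}\left(a^{i}\left(n-j\right)-a^{i+1}\left(n-j\right)\right).
\]
The combinatorial heart of the argument is the identity $\left|\symbol W_{m}\right|_{\intsymbol i}=a^{i}\left(m\right)-a^{i+1}\left(m\right)$ for all $i\geq0$ and $m\geq0$; that is, $a^{i}\left(m\right)-a^{i+1}\left(m\right)$ counts occurrences of the symbol $\intsymbol i$ in the length-$m$ prefix of $\symbol W$. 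The case $i=0$ is Lemma \ref{lem:Prefix}. I would prove the general case by induction on $i$ via the recursion $\left|\symbol W_{m}\right|_{\intsymbol i}=\left|\symbol W_{a\left(m\right)}\right|_{\intsymbol{i-1}}$: using $\symbol W=\symbol 0^{s}\sigma\left(\symbol W\right)$, every symbol $\intsymbol i$ with $i\geq1$ is the leading (and only nonzero) symbol of a block $\sigma\left(\intsymbol{i-1}\right)$, so the occurrences of $\intsymbol i$ in $\symbol W_{m}$ are in bijection with the occurrences of $\intsymbol{i-1}$ among the first $a\left(m\right)$ symbols of $\symbol W$, the latter being exactly the $a\left(m\right)$ blocks whose leading symbol has appeared by position $m$ (Lemma \ref{lem:Prefix}). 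Iterating gives $\left|\symbol W_{m}\right|_{\intsymbol i}=\left|\symbol W_{a^{i}\left(m\right)}\right|_{\symbol 0}=a^{i}\left(m\right)-a^{i+1}\left(m\right)$.

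It then remains to count the zeros among positions $s+1,\ldots,n$ of $\symbol W$ by the block that produces them. Each such zero is a trailing zero of a unique block $\sigma\left(\intsymbol v\right)=\intsymbol{v+1}\symbol 0^{r_{v+1}}$, and I would record it by the value $i:=v+1$ of that block's leading symbol together with its rank $j\in\left\{1,\ldots,r_{i}\right\}$ among the trailing zeros. A position computation shows that a rank-$j$ zero of a block headed by a symbol $\intsymbol i$ falls among positions $s+1,\ldots,n$ exactly when that leading symbol $\intsymbol i$ occurs at position at most $n-j$; hence for fixed $\left(i,j\right)$ the number of such zeros is $\left|\symbol W_{n-j}\right|_{\intsymbol i}$. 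Summing over $1\leq j\leq r_{i}$ and $i\geq1$ and substituting the identity above recovers the rewritten sum, which completes the proof.

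I expect the main obstacle to be the bookkeeping in this last step: one must check that associating to each zero the triple (leading-symbol value $i$, rank $j$, position) is a genuine bijection, that the cut-off $n-j$ is exactly right, and that a partial block at the right end of the prefix is accounted for automatically rather than requiring a separate boundary term. The decomposition $\symbol W=\symbol 0^{s}\sigma\left(\symbol W\right)$ together with the position formula from the proof of Lemma \ref{lem:an-is-well-defined.} is what should make these verifications clean.
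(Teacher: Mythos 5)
Your argument is correct, but it takes a genuinely different route from the paper's. The paper proves the recurrence by induction on $n$: it writes $a\left(n\right)=\nabla a\left(n\right)+a\left(n-1\right)$, introduces the differences $e_{i,j}=a^{i}\left(n-j\right)-a^{i}\left(n-1-j\right)$ and the statistic $d\left(n\right)=\max\left\{ k:a^{k}\left(n\right)-a^{k}\left(n-1\right)=1\right\}$, telescopes $\sum_{i}c_{i,j}e_{i,j}$ in $i$ down to $r_{d\left(n-j\right),j}$, and then invokes the tree-geometric Lemma \ref{lem:r_d(n-i,i)=00003D1} to show that exactly one index $i\geq0$ has $r_{d\left(n-i\right),i}=1$, namely the one for which the $\left(n-i\right)$-node is the left-most child of the parent of the $n$-node. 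You instead work entirely inside the word $\symbol W$: you telescope the sum in the other variable (using $r_{0,j}=\iversonian{-1\geq j}=0$ to collapse it to $\sum_{i\geq1}\sum_{j=1}^{r_{i}}\left(a^{i}\left(n-j\right)-a^{i+1}\left(n-j\right)\right)$), identify $a^{i}\left(m\right)-a^{i+1}\left(m\right)$ with $\left|\symbol W_{m}\right|_{\intsymbol i}$ via the decomposition $\symbol W=\symbol 0^{s}\sigma\left(\symbol W\right)$, and count the zeros of $\symbol W_{n}$ beyond the initial $\symbol 0^{s}$ block by block. The steps all check out: the recursion $\left|\symbol W_{m}\right|_{\intsymbol i}=\left|\symbol W_{a\left(m\right)}\right|_{\intsymbol{i-1}}$ holds because the nonzero letters of $\symbol W_{m}$ are exactly the leading letters of the first $a\left(m\right)$ blocks (Lemma \ref{lem:Prefix}); the cut-off $n-j$ is right because a block whose head sits at position $p$ has its rank-$j$ trailing zero at position $p+j$; and the degenerate indices with $n-j<s$ contribute zero on both sides of the identity, so no boundary correction is needed. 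What your approach buys is a direct, non-inductive derivation requiring only Lemma \ref{lem:Prefix} and the fixed-point equation, bypassing Lemmas \ref{lem:square}, \ref{lem:r_d(n)+1} and \ref{lem:r_d(n-i,i)=00003D1} entirely; what the paper's induction buys is the auxiliary quantity $d\left(n\right)$ and its interpretation as the distance from an $n$-node to the lowest common ancestor with its neighbour, which is of independent structural interest. If you write this up, the one place to be fully explicit is the claim that the number of blocks whose leading symbol lies in $\symbol W_{m}$ equals $a\left(m\right)$ -- that is the bridge between your position bookkeeping and Lemma \ref{lem:Prefix}.
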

\noindent{}We'll call a recurrence \emph{morphic} if it admits an
interpretation in terms of a tree $\tree T$ as described above.

\section{Some Examples}

In this section we present what we consider to be interesting examples
of morphic recurrences.

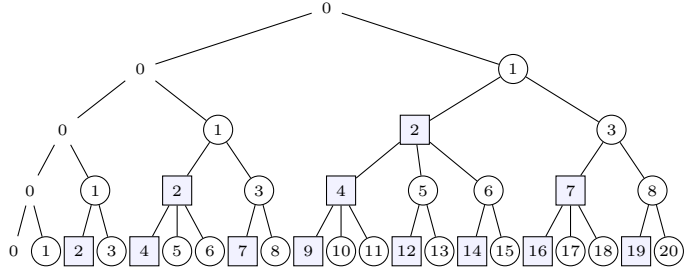
\begin{figure}
\tikzset{edge from parent/.style={draw, edge from parent path={(\tikzparentnode) -- (\tikzchildnode)}}}   
\tikzset{level distance=23pt}
\begin{tikzpicture}[nodes={inner sep=0pt,minimum height=11pt,minimum width=11pt,font=\tiny},sibling distance=1pt]

\Tree   [.\node[root]{0}; 
            [.\node[root]{0};
                [.\node[root]{0};
                    [.\node[root]{0};
                        [.\node[root]{0}; ]
                        [.\node[nonsquare]{1}; ] ]
                    [.\node[nonsquare]{1};
                        [.\node[square]{2}; ]
                        [.\node[nonsquare]{3}; ] ] ]
                [.\node[nonsquare]{1};
                    [.\node[square]{2};
                        [.\node[square]{4}; ]
                        [.\node[nonsquare]{5}; ] 
                        [.\node[nonsquare]{6}; ] ]
                    [.\node[nonsquare]{3};
                        [.\node[square]{7}; ]
                        [.\node[nonsquare]{8}; ] ] ] ]
            [.\node[nonsquare]{1}; 
                [.\node[square]{2};
                    [.\node[square]{4};
                        [.\node[square]{9}; ]
                        [.\node[nonsquare]{10}; ] 
                        [.\node[nonsquare]{11}; ] ]
                    [.\node[nonsquare]{5};
                        [.\node[square]{12}; ]
                        [.\node[nonsquare]{13}; ] ] 
                    [.\node[nonsquare]{6};
                        [.\node[square]{14}; ]
                        [.\node[nonsquare]{15}; ] ] ]
                [.\node[nonsquare]{3};
                    [.\node[square]{7};
                        [.\node[square]{16}; ]
                        [.\node[nonsquare]{17}; ] 
                        [.\node[nonsquare]{18}; ] ]
                    [.\node[nonsquare]{8};
                        [.\node[square]{19}; ]
                        [.\node[nonsquare]{20}; ] ] ] ] ]
\end{tikzpicture}

\caption{The tree $\tree T$ arising from $R=\left\langle 1,1,2,2,2,\ldots\right\rangle $,
with the value $\ell\left(v\right)$ shown on each node $v$. The
corresponding recurrence is $a\left(n\right)=n-1-a\left(n-1\right)-a\left(a\left(n-2\right)\right).$
This recurrence satisfies $a\left(n\right)=\lfloor(\sqrt{2}-1)\left(n+1\right)\rfloor$
(A097508) \cite{oeis}.}

\end{figure}

\subsection{Beatty Sequences}

A Beatty sequence is a sequence which has the form $\left\{ \left\lfloor \alpha n\right\rfloor :n\geq1\right\} $,
where $\alpha$ is some irrational constant. Of particular interest
is the case when $\alpha$ has the continued fraction expansion $\left[0;k,k,k,\ldots\right]$,
or equivalently, $\alpha=\frac{1}{2}\left(\sqrt{k^{2}+4}-k\right)$.
In this case, the corresponding Beatty sequence is the solution to
a morphic recurrence.
\begin{cor}
\label{cor:beatty}Let $k\geq1$, and let $\alpha=\left[0;k,k,k,\ldots\right]$.
Assume $a\left(n\right)=0$ for $n<k$, and for $n\geq k$, let
\[
a\left(n\right)=n-k+1-\left(\sum_{i=1}^{k-1}a\left(n-i\right)\right)-a\left(a\left(n-k\right)\right)\mbox{.}
\]
Then for nonnegative $n$, $a\left(n\right)=\left\lfloor \alpha\left(n+1\right)\right\rfloor $.\end{cor}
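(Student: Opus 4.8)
The plan is to verify directly, by strong induction on $n\geq 0$, that $f(n):=\lfloor\alpha(n+1)\rfloor$ satisfies both the initial conditions and the recurrence; since the right-hand side refers to $a$ only at indices strictly below $n$ (the inner value of the nested term is $a(n-k)=f(n-k)\leq n-k<n$, as $\alpha<1$ forces $f(x)\leq x$), the same induction shows the recurrence determines $a$ uniquely, so $a=f$. (Alternatively, for $k\geq 2$ the recurrence is precisely the instance of Theorem \ref{thm:main} coming from $R=\langle k-1,k-1,k,k,\ldots\rangle$, since then $s=k-1$ gives the constant $n-k+1$ and the coefficients $c_{i,j}$ produce exactly $\sum_{i=1}^{k-1}a(n-i)+a(a(n-k))$; well-definedness is then immediate from Lemma \ref{lem:an-is-well-defined.}.) Before starting I would record the arithmetic of $\alpha$: from $\alpha=[0;k,k,k,\ldots]$ we get $\alpha=1/(k+\alpha)$, hence $\alpha^{2}+k\alpha=1$, $1/\alpha=k+\alpha$, and $0<\alpha<1/k$ (so $k\alpha=1-\alpha^{2}<1$); moreover $\alpha$ is irrational, so $\alpha t\notin\mathbb{Z}$ for every integer $t\neq 0$.

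For the base case $0\leq n<k$ we have $\alpha(n+1)\leq\alpha k<1$, so $f(n)=0$, matching $a(n)=0$. For the inductive step, fix $n\geq k$, assume $a(m)=f(m)$ for all $0\leq m<n$, and substitute this into the recurrence. Setting $m:=n-k+1\geq 1$ and merging the target term $\lfloor\alpha(n+1)\rfloor$ with the block $\sum_{i=1}^{k-1}\lfloor\alpha(n-i+1)\rfloor$ into the $k$ consecutive floors $\sum_{j=1}^{k}\lfloor\alpha(m+j)\rfloor$, the claim $a(n)=f(n)$ reduces to the purely arithmetic identity
\[
\sum_{j=1}^{k}\left\lfloor \alpha\left(m+j\right)\right\rfloor+\left\lfloor \alpha\left(\left\lfloor \alpha m\right\rfloor+1\right)\right\rfloor=m.
\]
This bookkeeping is routine; all the content lies in this floor identity.

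The heart of the argument, and the step I expect to be the main obstacle, is this identity, whose difficulty is concentrated in the doubly nested floor $\lfloor\alpha(\lfloor\alpha m\rfloor+1)\rfloor$. Writing $p=\lfloor\alpha m\rfloor$ and $\theta=\{\alpha m\}\in(0,1)$ (the fractional part is nonzero by irrationality), I would treat the two pieces separately. Because $0<\alpha j\leq\alpha k<1$, each summand is $\lfloor\alpha(m+j)\rfloor=p+\iversonian{\theta+\alpha j\geq 1}$, so $\sum_{j=1}^{k}\lfloor\alpha(m+j)\rfloor=kp+C$ where $C:=\#\{j\in\{1,\ldots,k\}:\theta+\alpha j\geq 1\}$. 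For the nested term I would use $\alpha p=\alpha(\alpha m-\theta)=(1-k\alpha)m-\alpha\theta$ together with $1/\alpha=k+\alpha$ to rewrite $\alpha(p+1)=m-kp-\theta/\alpha+\alpha$, whence $\lfloor\alpha(p+1)\rfloor=m-kp-\lceil\theta/\alpha-\alpha\rceil$. Substituting, the identity collapses to the clean statement $C=\lceil\theta/\alpha-\alpha\rceil$. Finally I would verify this by noting $\theta+\alpha j\geq 1\iff k-j\leq\theta/\alpha-\alpha$, so that $C$ counts the integers $\ell\in\{0,\ldots,k-1\}$ with $\ell\leq\theta/\alpha-\alpha$; since $\theta/\alpha-\alpha=(m-pk)-(p+1)\alpha$ lies in $(-\alpha,k)\subset(-1,k)$ and is irrational (as $(p+1)\alpha\notin\mathbb{Z}$), this count is exactly $\lceil\theta/\alpha-\alpha\rceil$. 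The delicate points throughout are the strict bounds $0<\theta<1$ and the non-integrality of $\theta/\alpha-\alpha$, both of which must be drawn from the irrationality of $\alpha$ to exclude the off-by-one boundary cases.
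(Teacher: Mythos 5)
Your proof is correct, but it takes a genuinely different route from the paper's. The paper never touches floor functions directly: it identifies the recurrence as the morphic recurrence for $R=\left\langle k-1,k-1,k,k,\ldots\right\rangle$, uses Lemma \ref{lem:binary_coding} to express the difference sequence $\nabla a\left(n\right)$ as the coding of the fixed point of $\sigma_{R}$, and then imports Stolarsky's theorem (itself resting on a result of Markov) that the characteristic word of the Beatty sequence for $\alpha=\left[0;k,k,\ldots\right]$ is the fixed point of a conjugate morphism $\gamma$; the identity $\gamma\left(\symbol W\right)=\symbol 0^{k-1}\sigma\left(\symbol W\right)$ then forces the two fixed points to coincide. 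The case $k=1$ is handed off to the literature since it would require $s=0$. Your argument instead verifies the closed form by strong induction, reducing everything to the single identity $\sum_{j=1}^{k}\left\lfloor \alpha\left(m+j\right)\right\rfloor +\left\lfloor \alpha\left(\left\lfloor \alpha m\right\rfloor +1\right)\right\rfloor =m$ and proving it via the algebra $\alpha^{2}+k\alpha=1$, $1/\alpha=k+\alpha$, and irrationality. I checked the delicate steps --- the rewriting $\alpha\left(p+1\right)=m-kp-\theta/\alpha+\alpha$, the equivalence $\theta+\alpha j\geq1\iff k-j\leq\theta/\alpha-\alpha$, and the count $C=\lceil\theta/\alpha-\alpha\rceil$ using $\theta/\alpha-\alpha\in\left(-\alpha,k\right)$ and its non-integrality --- and they all hold; the reduction to the identity (shifting indices so the target term joins the sum) is also right. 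What each approach buys: the paper's proof is short, stays on-theme by exhibiting the Beatty sequence as an instance of the morphic framework, but depends on a nontrivial external result and excludes $k=1$; yours is entirely elementary and self-contained, handles $k=1$ uniformly, and doubles as a direct proof that the recurrence is well-posed, at the cost of a page of floor/ceiling bookkeeping and no insight into the word-combinatorial structure.
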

\begin{proof}
When $k=1$, this recurrence is simply Hofstadter's $G$ sequence,
and the conclusion has been proven independently by several authors
\cite{burton_curious_1986,downey_family_1982,granville_strange_1988}.
For $k>1$, we consider the sequence $R=\left\langle s,r_{1},r_{2},\ldots\right\rangle $
which gives rise to this recurrence. A little calculation shows this
recurrence arises precisely when $s=r_{1}=k-1$ and $r_{i}=k$ for
$i\geq2$. The corresponding morphism $\sigma$ is
\begin{align*}
\symbol r & \rightarrow\symbol{r0}^{k-1}\\
\symbol 0 & \rightarrow\symbol 1\symbol 0^{k-1}\\
\symbol 1 & \rightarrow\symbol 1\symbol 0^{k},
\end{align*}
where, for brevity, the symbols $\symbol 1,\symbol 2,\symbol 3,\ldots$
are all identified as $\symbol 1$. This identification is not a problem,
since the underlying structure of the tree $\tree T$ remains the
same.

Let $\symbol C$ be the infinite word $\symbol c_{1}\symbol c_{2}\symbol c_{3}\ldots$,
where $\symbol c_{n}=\intsymbol{\left\lfloor \alpha\left(n+1\right)\right\rfloor -\left\lfloor \alpha n\right\rfloor }$.
Applying a theorem of A. A. Markov, Stolarsky showed in \cite{stolarsky_beatty_1976}
that $\symbol C=\gamma^{\infty}\left(\symbol 0\right)$, where $\gamma$
is the morphism $\symbol 0\rightarrow\symbol 0^{k-1}\symbol 1$, $\symbol 1\rightarrow\symbol 0^{k-1}\symbol{10}$.
The two morphisms $\sigma$ and $\gamma$ appear to be similar in
their action on $\symbol 0$ and $\symbol 1$. Indeed, for any right-infinite
word $\symbol W$ on $\left\{ \symbol 0,\symbol 1\right\} $, 
\begin{equation}
\gamma\left(\symbol W\right)=\symbol 0^{k-1}\sigma\left(\symbol W\right)\mbox{.}\label{eq:sigma_gamma}
\end{equation}
This is because if $\symbol W=\symbol w_{1}\symbol w_{2}\symbol w_{3}\ldots$,
and $\phi$ is the morphism $\symbol{1\rightarrow10}$, $\symbol 0\rightarrow\symbol 1$,
\begin{align*}
\gamma\left(\symbol W\right) & =\gamma\left(\symbol w_{1}\right)\gamma\left(\symbol w_{2}\right)\gamma\left(\symbol w_{3}\right)\ldots\\
 & =\symbol 0^{k-1}\phi\left(\symbol w_{1}\right)\symbol 0^{k-1}\phi\left(\symbol w_{2}\right)\symbol 0^{k-1}\phi\left(\symbol w_{3}\right)\ldots\\
 & =\symbol 0^{k-1}\sigma\left(\symbol w_{1}\right)\sigma\left(\symbol w_{2}\right)\sigma\left(\symbol w_{3}\right)\ldots\\
 & =\symbol 0^{k-1}\sigma\left(\symbol W\right).
\end{align*}

Define the infinite word $\symbol B:=\symbol b_{1}\symbol b_{2}\symbol b_{3}\ldots$
so that $\symbol b_{n}=\left[\nabla a\left(n\right)\right]$. By Lemma
\ref{lem:binary_coding}, $\symbol{rB}=\sigma^{\infty}\left(\symbol r\right)$,
and so
\[
\symbol{rB}=\sigma\left(\symbol{rB}\right)=\sigma\left(\symbol r\right)\sigma\left(\symbol B\right)=\symbol{r0}^{k-1}\sigma\left(\symbol B\right).
\]
This implies
\begin{align*}
\symbol B & =\symbol 0^{k-1}\sigma\left(\symbol B\right)=\gamma\left(\symbol B\right).
\end{align*}
As $\gamma$ has only one fixed point, 
\[
\symbol B=\gamma^{\infty}\left(\symbol 0\right)=\symbol C.
\]

\end{proof}

\subsection{$k$-ary Meta-Fibonacci Sequences}

Define the recurrence
\[
b\left(n\right)=\sum_{i=1}^{k}b\left(n-i-b\left(n-i\right)\right)+s,
\]
which has initial conditions $b\left(n\right)=\max\left(0,n\right)$
for $n<s$ and is parametrized by two constants $k,s\geq1$.

Recurrences similar to this one have been studied extensively in recent
years \cite{isgur_solving_2011,ruskey_combinatorics_2009}. In fact,
$b\left(n\right)$ is a special case of those which are described
in \cite{isgur_solving_2011}, and from there a combinatorial interpretation
is given for $b\left(n\right)$ in terms of an infinite rooted tree
not unlike the one presented in this paper.

It turns out that $b\left(n\right)$ can also be described in terms
of a morphic recurrence.
\begin{cor}
If $a\left(n\right)$ is the sequence $n-b\left(n\right)$, then for
$n\geq s$, $a\left(n\right)$ satisfies
\[
a\left(n\right)=n-s-\sum_{i=1}^{k}a\left(n-i\right)+\sum_{i=1}^{k}a\left(a\left(n-i\right)\right)\mbox{,}
\]
which is the (\ref{eq:main_thm}) recurrence arising from letting
$R=\left\langle s,k,0,0,0,\ldots\right\rangle $.\end{cor}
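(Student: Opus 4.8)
The plan is to derive the claimed recurrence for $a(n) = n - b(n)$ directly from the definition of $b(n)$, and then to verify separately that the specific sequence $R = \left\langle s, k, 0, 0, 0, \ldots \right\rangle$ produces exactly this recurrence when substituted into the main recurrence (\ref{eq:main_thm}). So there are really two things to check, and I would handle them in this order.

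First I would start from the recurrence $b\left(n\right) = \sum_{i=1}^{k} b\left(n - i - b\left(n-i\right)\right) + s$ and substitute $b\left(m\right) = m - a\left(m\right)$ everywhere it appears. Inside the nesting, $n - i - b\left(n-i\right) = n - i - \left(\left(n-i\right) - a\left(n-i\right)\right) = a\left(n-i\right)$, which collapses the double nesting nicely. Thus $b\left(n - i - b\left(n-i\right)\right) = b\left(a\left(n-i\right)\right) = a\left(n-i\right) - a\left(a\left(n-i\right)\right)$. Summing over $i$ and adding $s$, and then using $a\left(n\right) = n - b\left(n\right)$ on the left, should yield
\[
a\left(n\right) = n - s - \sum_{i=1}^{k} a\left(n-i\right) + \sum_{i=1}^{k} a\left(a\left(n-i\right)\right),
\]
which is the stated identity. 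This step is essentially algebraic bookkeeping; the only subtlety is to make sure the initial-condition regime ($b\left(n\right) = \max\left(0,n\right)$ for $n < s$, versus $a\left(n\right)=0$ for $n<s$) is consistent under the substitution $a\left(n\right) = n - b\left(n\right)$, which it is for $0 \le n < s$ since there $b\left(n\right)=n$ gives $a\left(n\right)=0$.

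Second, I would confirm that this is the $R = \left\langle s, k, 0, 0, \ldots\right\rangle$ instance of (\ref{eq:main_thm}). With $r_1 = k$ and $r_j = 0$ for $j \ge 2$ (and $r_0 = -1$), I compute the coefficients $r_{i,j} = \iversonian{r_i \ge j}$ and $c_{i,j} = r_{i,j} - r_{i-1,j}$. For $i = 1$: $r_{1,j} = \iversonian{k \ge j}$, which is $1$ for $1 \le j \le k$, while $r_{0,j} = \iversonian{-1 \ge j} = 0$, so $c_{1,j} = 1$ for $1 \le j \le k$ and $0$ otherwise. For $i = 2$: $r_{2,j} = \iversonian{0 \ge j} = 0$ for $j \ge 1$, and $r_{1,j} = \iversonian{k \ge j}$, so $c_{2,j} = -\iversonian{k \ge j}$, giving $c_{2,j} = -1$ for $1 \le j \le k$. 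For $i \ge 3$ both $r_{i,j}$ and $r_{i-1,j}$ vanish, so $c_{i,j} = 0$. Substituting into $\sum_{i,j \ge 1} c_{i,j}\, a^{i}\left(n-j\right)$ gives $\sum_{j=1}^{k} a\left(n-j\right) - \sum_{j=1}^{k} a^{2}\left(n-j\right)$, and the overall sign in (\ref{eq:main_thm}) then reproduces the displayed recurrence exactly.

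The \textbf{main obstacle} I anticipate is not in either computation individually but in justifying the substitution legitimately over the full range $n \ge s$: I must confirm that the argument $a\left(n-i\right)$ appearing inside $b\left(\cdot\right)$ stays within the range where $b\left(m\right) = m - a\left(m\right)$ has been established, including for negative or small values of $n - i$. Since $a\left(m\right) = 0$ for $m < 0$ by convention and $b\left(m\right) = \max\left(0,m\right)$ for $m < s$, I would check that $a\left(n-i\right) \ge 0$ always (so $b\left(a\left(n-i\right)\right)$ is well-defined and the identity $b = \mathrm{id} - a$ applies), which follows because $a$ maps into $\mathbb{Z}_{\ge 0}$ by construction in the main theorem's setup. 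Once that range check is in place, both halves of the argument are routine and the corollary follows.
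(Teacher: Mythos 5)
Your proposal is correct and takes essentially the same approach as the paper: the paper performs the identical substitution $b\left(m\right)=m-a\left(m\right)$ and exploits the same collapse $n-i-b\left(n-i\right)=a\left(n-i\right)$, merely running the algebra in the opposite direction (from the $a$-recurrence to the $b$-recurrence, illustrated for $s=1$, $k=2$, rather than from $b$ to $a$). Your explicit computation of the coefficients $c_{i,j}$ for $R=\left\langle s,k,0,0,\ldots\right\rangle$ is also correct and supplies a detail the paper leaves implicit.
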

\begin{proof}
The recurrence for $a\left(n\right)$ follows directly from the definitions
of $a\left(n\right)$ and $b\left(n\right)$. We illustrate this calculation
when $s=1$ and $k=2$, so that 
\begin{align}
a\left(n\right) & =n-1-a\left(n-1\right)-a\left(n-2\right)+a\left(a\left(n-1\right)\right)+a\left(a\left(n-2\right)\right)\label{eq:complement_conolly}
\end{align}
with $a\left(n\right)=0$ for $n\leq1$. If we consider the sequence
$b\left(n\right)=n-a\left(n\right)$, then we have for $n\geq1$,
\begin{align*}
b\left(n\right) & =n-\left(n-1-a\left(n-1\right)+a\left(a\left(n-1\right)\right)-a\left(n-2\right)+a\left(a\left(n-2\right)\right)\right)\\
 & =1+a\left(n-1\right)-a\left(a\left(n-1\right)\right)+a\left(n-2\right)-a\left(a\left(n-2\right)\right)\\
 & =1+b\left(a\left(n-1\right)\right)+b\left(a\left(n-2\right)\right)\\
 & =1+b\left(n-1-b\left(n-1\right)\right)+b\left(n-2-b\left(n-2\right)\right)\mbox{.}
\end{align*}

\end{proof}
It can be directly shown that when $s=1$ and $k=2$, $b\left(n\right)$
is equal to $c\left(n+1\right)-1$, where $c\left(n\right)$ is the
well-known Conolly sequence (A046699) \cite{oeis} introduced in \cite{vajda_fibonacci_1989}.

\begin{figure}
\tikzset{edge from parent/.style={draw, edge from parent path={(\tikzparentnode) -- (\tikzchildnode)}}}   
\tikzset{level distance=22pt}
\begin{tikzpicture}[nodes={inner sep=0pt,minimum height=10pt,minimum width=10pt,font=\tiny},sibling distance=1pt]

\Tree   [.\node[root]{0};
            [.\node[root]{$\tree{T}$}; ]
            [.\node[nonsquare]{1}; 
                [.\node[square]{2}; 
                    [.\node[square]{5}; 
                        [.\node[square]{12}; \node[square]{27}; ] ] ]
                [.\node[nonsquare]{3}; 
                    [.\node[square]{6}; 
                        [.\node[square]{13}; \node[square]{28}; ] ]
                    [.\node[nonsquare]{7}; 
                        [.\node[square]{14}; \node[square]{29}; ]
                        [.\node[nonsquare]{15}; \node[square]{30}; \node[nonsquare]{31}; \node[nonsquare]{32}; ]
                        [.\node[nonsquare]{16}; \node[square]{33}; \node[nonsquare]{34}; \node[nonsquare]{35}; ] ]
                    [.\node[nonsquare]{8}; 
                        [.\node[square]{17}; \node[square]{36}; ]
                        [.\node[nonsquare]{18}; \node[square]{37}; \node[nonsquare]{38}; \node[nonsquare]{39}; ]
                        [.\node[nonsquare]{19}; \node[square]{40}; \node[nonsquare]{41}; \node[nonsquare]{42}; ] ] ]
                [.\node[nonsquare]{4};
                    [.\node[square]{9}; 
                        [.\node[square]{20}; \node[square]{43}; ] ]
                    [.\node[nonsquare]{10}; 
                        [.\node[square]{21}; \node[square]{44}; ]
                        [.\node[nonsquare]{22}; \node[square]{45}; \node[nonsquare]{46}; \node[nonsquare]{47}; ]
                        [.\node[nonsquare]{23}; \node[square]{48}; \node[nonsquare]{49}; \node[nonsquare]{50}; ] ]
                    [.\node[nonsquare]{11}; 
                        [.\node[square]{24}; \node[square]{51}; ]
                        [.\node[nonsquare]{25}; \node[square]{52}; \node[nonsquare]{53}; \node[nonsquare]{54}; ]
                        [.\node[nonsquare]{26}; \node[square]{55}; \node[nonsquare]{56}; \node[nonsquare]{57}; ] ] ] ] ]
\end{tikzpicture}\caption{\label{fig:Conolly}The tree $\tree T$ representing the sequence
(\ref{eq:complement_conolly}), with the left subtree, itself a copy
of $\tree T$, stubbed out. As before, $\ell\left(v\right)$ is shown
for each node $v$.}

\end{figure}
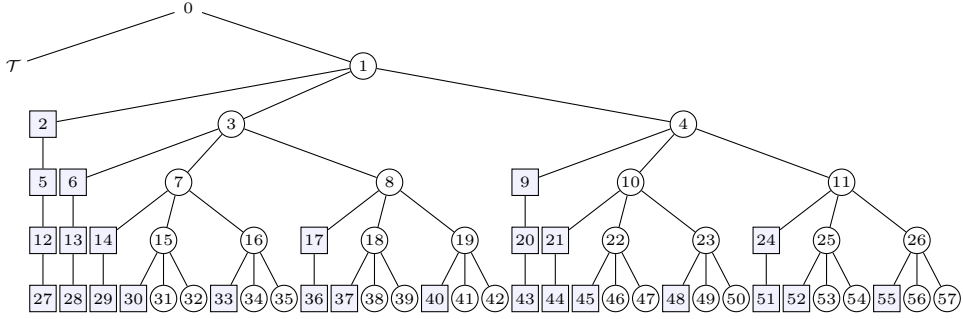

\section{Proof of the Theorem}

\global\long\def\parent#1#2{#1^{#2}}
We define a \emph{left-most} node in $\tree T$ to be a node which
has no siblings on its left. We also define an $n$\emph{-node} in
$\tree T$ to be a node $v$ which satisfies $\ell\left(v\right)=n$.
Finally, we denote the parent of a node $v$ by $\parent v1$, the
grandparent by $\parent v2$, and so on. We assume $v=\parent v0$.

To prove the theorem, we introduce the sequence 
\[
d\left(n\right)=\max\left\{ k:a^{k}\left(n\right)-a^{k}\left(n-1\right)=1\right\} 
\]
for $n\geq1$. It is a consequence of Lemma \ref{lem:binary_coding}
that $a\left(n\right)$ is \emph{slow-growing}; that is, $a$ is a
monotone increasing sequence with successive differences equal to
either zero or one. For each $n$, the number zero lies in this set.
Moreover, since $a\left(0\right)=0$, the set is bounded above by
the depth of any arbitrary $n$-node. Therefore, $d\left(n\right)$
is well-defined for all $n$. If we consider an $n$-node and its
adjacent $\left(n-1\right)$-node in $\tree T$, we may interpret
$d\left(n\right)$ as the length of the path from either of these
nodes to their lowest common ancestor.

We begin with the following easy lemma:
\begin{lem}
\label{lem:square}Let $v$ be an $n$-node. The following statements
are equivalent:\end{lem}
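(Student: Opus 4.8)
The plan is to funnel all of the listed conditions through the single equation $\nabla a(n)=1$. Reading the lemma as characterizing when the $n$-node $v$ is a square node, its natural companions are that $v$ is a leftmost node, that $\nabla a(n)=1$, and that $d(n)\ge 1$; I will show each is equivalent to $\nabla a(n)=1$. First I would record that every $n$-node carries the same label, namely the symbol in position $n$ of $\sigma^{\infty}(\symbol r)$. This is already implicit in Lemma \ref{lem:an-is-well-defined.}: each row of $\tree T$ spells a prefix of $\sigma^{\infty}(\symbol r)$, and these prefixes are nested because $\sigma$ is prolongable on $\symbol r$, so two $n$-nodes in different rows occupy the same position $n$ of one common right-infinite word and hence agree on their label. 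By definition $v$ is square exactly when this label is an integer symbol $\intsymbol j$ with $j\ge 1$.

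Next I would translate squareness into the difference $\nabla a(n)$. For $n\ge 1$ the label is never $\symbol r$, since the $\symbol r$-labelled nodes form the left spine and all occur at position $0$; hence $v$ is square iff its label is not $\symbol 0$. Applying the coding $\beta$ and Lemma \ref{lem:binary_coding}, this is precisely $\symbol b_n=\symbol 1$, i.e. $\nabla a(n)=1$. The remaining conditions are then cheap: the parents of the $n$-node and the $(n-1)$-node sit at positions $a(n)$ and $a(n-1)$ in the row above and coincide iff $a(n)=a(n-1)$, so $\nabla a(n)=1$ says exactly that $v$ has no left sibling, i.e. $v$ is leftmost; and $\nabla a(n)=1$ is literally the assertion that $k=1$ lies in the set defining $d(n)$, i.e. $d(n)\ge 1$.

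The implication that needs real care---and which I expect to be the crux---is that a leftmost $n$-node with $n\ge 1$ is genuinely square, equivalently that $\nabla a(n)=1$ forces $a(n)\ge 1$, so that the parent is a non-$\symbol r$ node $\intsymbol i$ and the child is $\intsymbol{i+1}$ with $i+1\ge 1$. I would dispatch this at the bottom of the tree: the root's children spell $\symbol{r0}^{s}$, so positions $1,\dots,s$ are all $\symbol 0$ and $a(n)=0$ for $0\le n\le s$ while $a(s+1)=1$; since $a$ is monotone with steps in $\{0,1\}$, we get $a(n)=0$ iff $0\le n\le s$, a range on which $\nabla a\equiv 0$. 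Thus $\nabla a(n)=1$ with $n\ge 1$ indeed gives $a(n)\ge 1$, closing the loop. As a by-product, tracking the leftmost-child chain $\intsymbol j\leftarrow\intsymbol{j-1}\leftarrow\cdots\leftarrow\intsymbol 0=\symbol 0$ together with the recursion $d(n)=1+d(a(n))$ (valid whenever $\nabla a(n)=1$) yields the sharper identity $d(n)=j$ for an $n$-node labelled $\intsymbol j$; only the weaker $d(n)\ge 1\iff\nabla a(n)=1$ is needed here, but the finer identity is what I would reach for in the main theorem.
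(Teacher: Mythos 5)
Your argument is correct, but it is organized quite differently from the paper's. The paper proves the cycle $(1)\Rightarrow(2)\Rightarrow(3)\Rightarrow(4)\Rightarrow(1)$ by direct structural arguments in $\tree T$, the main work being $(4)\Rightarrow(1)$, where the label of $v$ is reconstructed from that of $v^{1}$. You instead use $(3)$ as a hub and obtain $(1)\Leftrightarrow(3)$ almost for free from Lemma \ref{lem:binary_coding}, needing only that all $n$-nodes carry the common label $\symbol w_{n}$ (rows spell nested prefixes of $\sigma^{\infty}\left(\symbol r\right)$) and that $\symbol r$ occurs only in position $0$. This is a legitimate shortcut rather than a circularity: Lemma \ref{lem:binary_coding} rests on Lemma \ref{lem:Prefix}, whose proof already contains the structural fact --- non-$\symbol 0$ labels occur exactly at left-most children --- that the paper re-derives inside the present lemma, so you are reusing that work, not omitting it. What the paper's longer route buys is the explicit label-tracking of $(4)\Rightarrow(1)$, which serves as the warm-up for Lemma \ref{lem:r_d(n)+1}; indeed your closing identity $d\left(n\right)=j$ for an $n$-node labelled $\intsymbol j$ is exactly that lemma's content. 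Two presentational points, neither a gap: your third paragraph is logically redundant once $(1)\Leftrightarrow(3)\Leftrightarrow(2)$ is in hand (what it proves is, in effect, the paper's $(4)\Rightarrow(1)$, and the device $\nabla a\left(n\right)=1\Rightarrow a\left(n\right)\geq1$ is the same one the paper uses); and in passing from $d\left(n\right)\geq1$ back to $\nabla a\left(n\right)=1$ you should say explicitly that the set $\left\{ k:a^{k}\left(n\right)-a^{k}\left(n-1\right)=1\right\} $ is an initial segment of the nonnegative integers, because if $a^{k}\left(n\right)=a^{k}\left(n-1\right)$ then all later iterates agree --- the paper glosses this equally quickly.
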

\begin{enumerate}
\item $v$ is labelled $\intsymbol k$ for some $k\geq1$.
\item $v$ is a left-most node, and $n\geq1$.
\item $\nabla a\left(n\right)=1$.
\item $d\left(n\right)\geq1$.
\end{enumerate}
Nodes which satisfy these conditions will be called \emph{square}
nodes.
\begin{proof}
$\left(1\right)\implies\left(2\right)$: If $v$ is labelled $\left[k\right]$
for $k\geq1$, then $\parent v1$ must exist and must be labelled
$\left[k-1\right]$. The child nodes of $\parent v1$ spell the word
$\sigma\left(\left[k-1\right]\right)=\left[k\right]\symbol 0^{r_{k}}$,
and so $v$, which has the unique $\left[k\right]$ label, must be
the left-most child of $\parent v1$. The fact that $n>0$ follows
from the fact that all 0-nodes have the label $\symbol r$, not $\left[k\right]$.

$\left(2\right)\implies\left(3\right)$: Node $\parent v1$ must exist
since otherwise $v$ would be a 0-node. For the same reason, $\parent v1$
must not be a 0-node. But this means that $\parent v1$ is positioned
to the right of another node $\parent w1$, and since $\sigma$ is
nonerasing, every node in $\tree T$ has a child, and so $\parent w1$
is the parent of a $\left(n-1\right)$-node $w$ sitting next to $v$.
Therefore, we have
\[
\nabla a\left(n\right)=a\left(n\right)-a\left(n-1\right)=\ell\left(\parent v1\right)-\ell\left(\parent w1\right)=1.
\]

$\left(3\right)\implies\left(4\right)$: This implication is immediate
from the definitions.

$\left(4\right)\implies\left(1\right)$: The fact that $d\left(n\right)\geq1$
means that, in particular, $a\left(n\right)-a\left(n-1\right)=1$.
Since $a$ is nonnegative, $a\left(n\right)$ must be positive, which
implies that $\parent v1$ exists and $\ell\left(\parent v1\right)\geq1$.
In other words, $\parent v1$ is not the left-most node in its row
in $\tree T$. This implies that the label of $\parent v1$ is $\left[j\right]$
for some $j\geq0$, since the only nodes with the $\symbol r$ label
are the 0-nodes. This also implies that $v$ is not a 0-node, since
the children of the nodes to the left of $\parent v1$ must come before
$v$. In particular, there exists an $\left(n-1\right)$-node $w$
sitting next to $v$ on the same level as $v$. The fact that $\nabla a\left(n\right)=1$
implies that $v$ is the left-most node of $\parent v1$, since if
it wasn't then $w$ would also be a child of $\parent v1$ and we'd
have $a\left(n\right)=\ell\left(\parent v1\right)=a\left(n-1\right)$.
Therefore, node $v$ must have label $\left[j+1\right]$.\end{proof}
\begin{lem}
\label{lem:r_d(n)+1}Suppose $v$ is a square $n$-node. Then $\parent v1$
has $r_{d\left(n\right)}+1$ children.\end{lem}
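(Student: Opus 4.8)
The plan is to reduce the entire statement to the single claim that $d\left(n\right)=k$, where $\intsymbol k$ is the label of $v$. By Lemma~\ref{lem:square} the square node $v$ carries a label $\intsymbol k$ with $k\geq1$, and the argument for $\left(1\right)\implies\left(2\right)$ there shows that $\parent v1$ is labelled $\intsymbol{k-1}$. The children of a node labelled $\intsymbol{k-1}$ spell $\sigma\left(\intsymbol{k-1}\right)=\intsymbol k\symbol 0^{r_{k}}$, so $\parent v1$ has exactly $r_{k}+1$ children. Hence it suffices to prove that the label index $k$ coincides with $d\left(n\right)$.

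To compute $d\left(n\right)$ I would pass to the tree. Iterating the defining relation $a\left(\ell\left(u\right)\right)=\ell\left(\parent u1\right)$ gives $a^{m}\left(n\right)=\ell\left(\parent vm\right)$ and $a^{m}\left(n-1\right)=\ell\left(\parent wm\right)$, where $w$ is the $\left(n-1\right)$-node immediately to the left of $v$. Since $a$ is slow-growing, the difference $a^{m}\left(n\right)-a^{m}\left(n-1\right)=\ell\left(\parent vm\right)-\ell\left(\parent wm\right)$ equals $1$ exactly when $\parent vm\neq\parent wm$ and equals $0$ once the two ancestors coincide. Thus $d\left(n\right)$ is precisely the level at which the ancestor chains of $v$ and $w$ first merge, i.e. the distance from $v$ to the lowest common ancestor of $v$ and $w$, matching the interpretation noted before the lemma.

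The heart of the argument is to locate this merge. First I would climb the ancestor chain of $v$: repeatedly applying $\left(1\right)\implies\left(2\right)$ of Lemma~\ref{lem:square} shows $\parent vm$ is labelled $\intsymbol{k-m}$ for $0\leq m\leq k$, so $\parent v0,\ldots,\parent v{k-1}$ are all square nodes, hence left-most children, while $\parent vk$ is labelled $\intsymbol 0$. The key structural observation is that $\intsymbol 0$ is never the label of a left-most child, since the first letter of $\sigma\left(\symbol r\right)$ is $\symbol r$ and the first letter of $\sigma\left(\intsymbol j\right)$ is $\intsymbol{j+1}$ with $j+1\geq1$. I would then prove by induction on $m$ that $\parent wm$ lies immediately to the left of $\parent vm$ for $0\leq m\leq k$: the base case is the adjacency of $w$ and $v$, and the inductive step uses that $\parent vm$ is a left-most child, so its immediate left neighbour $\parent wm$ is the right-most child of the node immediately to the left of $\parent v{m+1}$, forcing $\parent w{m+1}$ to sit just left of $\parent v{m+1}$.

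Finally, because $\parent vk$ is labelled $\intsymbol 0$ it is \emph{not} a left-most child, so the node immediately to its left, namely $\parent wk$, is a sibling of it; consequently $\parent w{k+1}=\parent v{k+1}$, and the chains merge at level $k+1$ but at no earlier level. This gives $d\left(n\right)=k$ and hence the count $r_{d\left(n\right)}+1=r_{k}+1$ for the children of $\parent v1$. I expect the main obstacle to be the bookkeeping in the inductive step, namely keeping the two chains exactly adjacent rather than merely distinct, since it is precisely this adjacency, together with the fact that $\intsymbol 0$ labels no left-most child, that pins the merge at level $k+1$ and thereby identifies $d\left(n\right)$ with the label index.
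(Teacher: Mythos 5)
Your proof is correct and is essentially the paper's argument run in the opposite direction: both analyses track the parallel ancestor chains of $v$ and of the adjacent $\left(n-1\right)$-node $w$, using their level-by-level adjacency together with the fact that a left-most child carries the label $\left[j+1\right]$ while every other child is labelled $\mathtt{0}$. The paper starts at the merge level $d\left(n\right)+1$ (where $v^{d\left(n\right)}$ and $w^{d\left(n\right)}$ are siblings, forcing the label $\mathtt{0}$) and propagates labels downward to conclude $v^{1}$ is labelled $\left[d\left(n\right)-1\right]$, whereas you start from the label $\left[k\right]$ of $v$ and climb upward to locate the merge, concluding $d\left(n\right)=k$; the content is the same.
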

\begin{proof}
We show that $\parent v1$ is labelled $\intsymbol{d\left(n\right)-1}$.
In fact, we show the stronger statement that the label of $\parent vi$
is $\left[d\left(n\right)-i\right]$ for $1\leq i\leq d\left(n\right)$.
The result will follow from the fact that the children of $\parent v1$
spell the word $\sigma\left(\intsymbol{d\left(n\right)-1}\right)=\intsymbol{d\left(n\right)}\symbol 0^{r_{d\left(n\right)}}$,
which has $r_{d\left(n\right)}+1$ symbols.

Let $w$ be the $\left(n-1\right)$-node to the left of $v$. Our
initial goal is to prove that node $\parent v{d\left(n\right)}$ is
labelled $\symbol 0$. By definition of $d\left(n\right)$, we have
$\parent v{d\left(n\right)}\neq\parent w{d\left(n\right)}$ but $\parent v{d\left(n\right)+1}=\parent w{d\left(n\right)+1}$.
Therefore, $\parent v{d\left(n\right)}$ and $\parent w{d\left(n\right)}$
are siblings. Since $\parent v{d\left(n\right)}$ sits to the right
of $\parent w{d\left(n\right)}$, $\parent v{d\left(n\right)}$ must
have the label $\symbol 0$. This is because for any $\symbol x\in\Sigma$,
the word $\sigma\left(\symbol x\right)$ contains only zeros after
the first symbol.

Our next goal is to show, for $1\leq i\leq d\left(n\right)-1$, that
the label of node $v^{d\left(n\right)-i}$ is $\intsymbol i$. Observe
that nodes $\parent v{d\left(n\right)-i}$ and $\parent w{d\left(n\right)-i}$
must have different parents. Indeed, if this were not the case, then
we'd have $\parent v{d\left(n\right)}=\parent w{d\left(n\right)}$,
a contradiction. As there are no nodes sitting between $\parent v{d\left(n\right)-i}$
and $\parent w{d\left(n\right)-i}$, $\parent v{d\left(n\right)-i}$
must be the left-most node of its parent, $\parent v{d\left(n\right)-\left(i-1\right)}$.
If we assume inductively that $\parent v{d\left(n\right)-\left(i-1\right)}$
is labelled $\intsymbol{i-1}$, then $\parent v{d\left(n\right)-i}$
is labelled the first symbol of $\sigma\left(\intsymbol{i-1}\right)$,
which is $\intsymbol i$.
\end{proof}
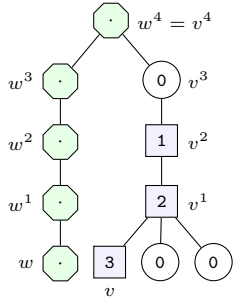
\begin{figure}
\tikzset{edge from parent/.style={draw, edge from parent path={(\tikzparentnode) -- (\tikzchildnode)}}}   
\tikzset{level distance=23pt}
\begin{tikzpicture}[nodes={inner sep=3pt,minimum height=12pt,minimum width=12pt,font=\scriptsize},
sibling distance=.2cm]

\Tree   [.\node[maybesquare](rt){$\cdot$};
            [.\node[maybesquare](v0){$\cdot$};
                [.\node[maybesquare](v1){$\cdot$}; 
                    [.\node[maybesquare](v2){$\cdot$};
                        [.\node[maybesquare](v3){$\cdot$}; ] ] ] ]
            [.\node[nonsquare](w0){$\symbol{0}$};
                [.\node[square](w1){$\symbol{1}$}; 
                    [.\node[square](w2){$\symbol{2}$}; 
                        [.\node[square](w3){$\symbol{3}$}; ]
                        [.\node[nonsquare]{$\symbol{0}$}; ]
                        [.\node[nonsquare]{$\symbol{0}$}; ] ] ] ] ]
                    
\node[right of=rt,minimum width=2.2cm, xshift=.45cm, text width=2.2cm,align=left]{$\parent w{4}=\parent v{4}$};

\node[right of=w0,minimum width=2.2cm, xshift=.45cm, text width=2.2cm,align=left]{$\parent v{3}$};
\node[right of=w1,minimum width=2.2cm, xshift=.45cm, text width=2.2cm,align=left]{$\parent v{2}$};
\node[right of=w2,minimum width=2.2cm, xshift=.45cm, text width=2.2cm,align=left]{$\parent v{1}$};
\node[below of=w3,yshift=.6cm]{$v$};

\node[left of=v0,minimum width=2.2cm, xshift=-.45cm, text width=2.2cm,align=right]{$\parent w{3}$};
\node[left of=v1,minimum width=2.2cm, xshift=-.45cm, text width=2.2cm,align=right]{$\parent w{2}$};
\node[left of=v2,minimum width=2.2cm, xshift=-.45cm, text width=2.2cm,align=right]{$\parent w{1}$};
\node[left of=v3,minimum width=2.2cm, xshift=-.45cm, text width=2.2cm,align=right]{$w$};

\end{tikzpicture}\caption{A depiction of Lemma \ref{lem:r_d(n)+1} when $d\left(n\right)=3$
and $r_{3}=2$. The green-shaded octagonal nodes may or may not be
square.}
\end{figure}

\begin{lem}
\label{lem:r_d(n-i,i)=00003D1} Suppose $n>s$ and $i\geq0$. Let
$v$ be an $n$-node and $w$ the $\left(n-i\right)$-node on the
same level as $v$. Then $r_{d\left(n-i\right),i}=1$ if and only
if $w$ is the left-most child of $v^{1}$.\end{lem}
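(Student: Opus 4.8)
The plan is to strip the statement down to its combinatorial core. By the definition $r_{d(n-i),i}=\iversonian{r_{d(n-i)}\geq i}$, it suffices to show that the inequality $r_{d(n-i)}\geq i$ holds exactly when $w$ is the leftmost child of $\parent v1$. The engine for this is Lemma~\ref{lem:r_d(n)+1}, which converts the integer $d(n-i)$ attached to the node $w$ into the number of children of its parent. Throughout I would write $p=\parent v1$ and use the structural fact (implicit in the construction of $\tree T$) that the children of any node occupy a contiguous block on the next row; consequently the $(t+1)$-st child of a node is precisely the node sitting $t$ positions to the right of that node's leftmost child.

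For the direction $r_{d(n-i)}\geq i\implies w$ leftmost child of $p$: first I would observe that, since $r_0=-1$ and $i\geq 0$, the inequality $r_{d(n-i)}\geq i$ forces $d(n-i)\geq 1$, so by Lemma~\ref{lem:square} the node $w$ is square and hence is the leftmost child of $\parent w1$. Applying Lemma~\ref{lem:r_d(n)+1} to $w$ shows that $\parent w1$ has $r_{d(n-i)}+1\geq i+1$ children, so it has an $(i+1)$-st child; by contiguity this child is the node lying $i$ positions to the right of $w$ on $w$'s row, which is exactly $v$. Therefore $\parent v1=\parent w1$ and $w$ is its leftmost child.

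For the converse I would assume $w$ is the leftmost child of $p=\parent v1$, so that $w$ is a leftmost node. By Lemma~\ref{lem:square} a leftmost node is either square or else the unique $\symbol r$-labelled $0$-node of its row. The hypothesis $n>s$ excludes the second possibility: if $w$ were a $0$-node then $\parent w1$ would be a $0$-node too, whose children spell $\sigma(\symbol r)=\symbol r\symbol 0^{s}$ and thus number only $s+1$; but $v$ sits $i=n$ positions to the right of such a $w$, i.e. beyond the last child of $p$, contradicting that $v$ is a child of $p$. Hence $w$ is square, and Lemma~\ref{lem:r_d(n)+1} gives that $p=\parent w1$ has $r_{d(n-i)}+1$ children. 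Since $v$ is the child of $p$ lying $i$ positions to the right of its leftmost child $w$, the node $p$ has at least $i+1$ children, whence $r_{d(n-i)}+1\geq i+1$, that is $r_{d(n-i)}\geq i$.

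The step I expect to require the most care is the boundary bookkeeping rather than the main geometry. Specifically, I must let the convention $r_0=-1$ do its work in the degenerate case $i=0$ (where $w=v$ and the assertion collapses to ``$v$ is square''), and I must invoke $n>s$ at exactly the right moment to rule out the $0$-node as the relevant leftmost node in the converse. Once these two degenerate situations are folded in correctly, each implication is a short deduction from Lemma~\ref{lem:r_d(n)+1} together with the contiguity of sibling blocks.
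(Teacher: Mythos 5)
Your proof is correct and follows essentially the same route as the paper's: both directions reduce to Lemma \ref{lem:square} (to identify $w$ as a square node) and Lemma \ref{lem:r_d(n)+1} (to count the children of $\parent w1$), with the convention $r_{0}=-1$ absorbing the case $d\left(n-i\right)=0$ and the hypothesis $n>s$ ruling out the $\symbol r$-labelled $0$-node in the converse. The only cosmetic difference is that you make the contiguity-of-siblings bookkeeping explicit where the paper leaves it implicit.
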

\begin{proof}
By definition, $r_{d\left(n-i\right),i}=1$ means that $d\left(n-i\right)\geq1$
and $r_{d\left(n-i\right)}\geq i$. We therefore have that $w$ is
a square node, so by Lemma \ref{lem:r_d(n)+1}, $\parent w1$ has
at least $i+1$ children. Hence, $\parent w1$ has enough children
so that $v$ is included among them.

Conversely, assume $w$ is the left-most child of $v^{1}$. Since
$n>s$, $v^{1}$ is not a 0-node. Since $v$ and $w$ are siblings,
we further have that $n-i\geq1$. Thus, $w$ is a square node. By
Lemma \ref{lem:r_d(n)+1}, $w^{1}$ has $r_{d\left(n-i\right)}+1$
children, and by assumption, $w^{1}$ has at least $i+1$ children;
this gives $r_{d\left(n-i\right)}\geq i$. By Lemma \ref{lem:square},
$d\left(n-i\right)\geq1$. These two inequalities imply $r_{d\left(n-i\right),i}=1$.
\end{proof}

\begin{proof}
[Proof of theorem.]

We proceed by induction on $n$. When $n=s$, we expect that $a\left(n\right)=0$,
and indeed this is the case:
\[
a\left(r_{0}\right)=0-\sum_{i,j\geq1}c_{i,j}a^{i}\left(r_{0}-j\right)=0\mbox{.}
\]
Suppose, then, that $n>s$ and $a\left(n-1\right)$ satisfies the
recurrence. Then,
\begin{align*}
a\left(n\right) & =\nabla a\left(n\right)+a\left(n-1\right)=\nabla a\left(n\right)+n-1-s-\sum_{i,j\geq1}c_{i,j}a^{i}\left(n-1-j\right)\mbox{.}
\end{align*}
 Define, for $i,j\geq0$, 
\[
e_{i,j}=a^{i}\left(n-j\right)-a^{i}\left(n-1-j\right)\mbox{.}
\]
Observe that the sequence $\left(e_{1,j},e_{2,j},e_{3,j},\ldots\right)$
consists of a finite number of consecutive ones, followed by an infinite
number of consecutive zeros. Moreover, the number of ones in this
sequence is $d\left(n-j\right)$. Thus,
\begin{align*}
a\left(n\right) & =\nabla a\left(n\right)+n-1-s-\sum_{i,j\geq1}c_{i,j}\left(a^{i}\left(n-j\right)-e_{i,j}\right)\\
 & =\nabla a\left(n\right)+n-1-s+\sum_{i,j\geq1}c_{i,j}e_{i,j}-\sum_{i,j\geq1}c_{i,j}a^{i}\left(n-j\right)\mbox{.}
\end{align*}
But
\[
\sum_{i,j\geq1}c_{i,j}e_{i,j}=\sum_{j\geq1}\sum_{i=1}^{d\left(n-j\right)}c_{i,j}=\sum_{j\geq1}\sum_{i=1}^{d\left(n-j\right)}\left(r_{i,j}-r_{i-1,j}\right)=\sum_{j\geq1}r_{d\left(n-j\right),j}\mbox{,}
\]
and since 
\[
r_{d\left(n\right),0}=\iversonian{r_{d\left(n\right)}\geq0}=\iversonian{d\left(n\right)\geq1}=\nabla a\left(n\right)\mbox{,}
\]
we can write $a\left(n\right)$ as 
\[
a\left(n\right)=n-1-s+\sum_{i\geq0}r_{d\left(n-i\right),i}-\sum_{i,j\geq1}c_{i,j}a^{i}\left(n-j\right)\mbox{.}
\]
Lemma \ref{lem:r_d(n-i,i)=00003D1} provides both the existence and
uniqueness of a nonnegative integer $i$ such that $r_{d\left(n-i\right),i}=1$.
Consequently, 
\[
\sum_{i\geq0}r_{d\left(n-i\right),i}=1\mbox{.}
\]
Substituting this into the above expression for $a\left(n\right)$,
we get
\[
a\left(n\right)=n-s-\sum_{i,j\geq1}c_{i,j}a^{i}\left(n-j\right)\mbox{.}
\]

\end{proof}

\section{Generating Functions}

Let $R=\left\langle s,r_{1},r_{2},\ldots\right\rangle $ with $s\geq1$,
and write $\sigma=\sigma_{R}$. Let $\symbol L_{n}=\sigma^{n}\left(\symbol 0^{s}\right)$
and $\symbol T_{n}=\symbol r^{-1}\sigma^{n}\left(\symbol r\right)$.
One can easily show by induction that $\symbol T_{n}$ can be also
written as follows:
\[
\symbol T_{n}=\symbol L_{0}\symbol L_{1}\symbol L_{2}\ldots\symbol L_{n-1}=\symbol 0^{s}\sigma\left(\symbol 0^{s}\right)\sigma^{2}\left(\symbol 0^{s}\right)\ldots\sigma^{n-1}\left(\symbol 0^{s}\right).
\]
Given $\intsymbol x\in\Sigma\setminus\left\{ \symbol r\right\} $,
we obtain the following recurrence relation from the definition of
$\sigma$:
\[
\left|\symbol L_{n+1}\right|_{\left[x\right]}=\begin{cases}
s & \mbox{if }n=0\\
\left|\symbol L_{n}\right|_{\left[x-1\right]} & \mbox{if }n\geq1\mbox{ and }x\geq1\\
\sum_{j\geq0}r_{j+1}\left|\symbol L_{n}\right|_{\left[j\right]} & \mbox{if }n\geq1\mbox{ and }x=0.
\end{cases}
\]
From this recurrence relation we deduce that for $n\geq1$,
\[
\left|\symbol L_{n+1}\right|_{\symbol 0}=\sum_{i=0}^{n}r_{i+1}\left|\symbol L_{n-i}\right|_{\symbol 0}.
\]
Define the generating functions
\begin{align*}
R\left(z\right) & :=\sum_{n\geq0}r_{n}z^{n}\\
N\left(z\right) & :=\sum_{n\geq0}\left|\symbol L_{n}\right|_{\symbol 0}z^{n}
\end{align*}
Note that
\begin{align*}
N\left(z\right)R\left(z\right) & =\left(\sum_{n\geq0}\left|\symbol L_{n}\right|_{\symbol 0}z^{n}\right)\left(\sum_{n\geq0}r_{n}z^{n}\right)\\
 & =\sum_{n\geq0}\left(\sum_{i=0}^{n}r_{i}\left|\symbol L_{n-i}\right|_{\symbol 0}\right)z^{n}\\
 & =\sum_{n\geq0}\left(-\left|\symbol L_{n}\right|_{\symbol 0}+\sum_{i=1}^{n}r_{i}\left|\symbol L_{n-i}\right|_{\symbol 0}\right)z^{n}\\
 & =-N\left(z\right)+\sum_{n\geq0}\left(\sum_{i=0}^{n-1}r_{i+1}\left|\symbol L_{n-1-i}\right|_{\symbol 0}\right)z^{n}\\
 & =-N\left(z\right)+\sum_{n\geq1}\left|\symbol L_{n}\right|_{\symbol 0}z^{n}\\
 & =-N\left(z\right)+N\left(z\right)-s\\
 & =-s.
\end{align*}
Thus,
\[
N\left(z\right)=-\frac{s}{R\left(z\right)}.
\]
Let $L\left(z\right):=\sum_{n\geq0}\left|\symbol L_{n}\right|z^{n}$.
This is the generating function for the number of nodes of the $\left(n+1\right)$\textsuperscript{th}
row of $\tree T_{R}'$, where $\tree T_{R}'$ is the tree that results
from pruning the left-most subtree of $\tree T_{R}$. The recurrence
relation gives $\left|\symbol L_{n}\right|=\sum_{i=0}^{n}\left|\symbol L_{n}\right|_{\intsymbol i}=\sum_{i=0}^{n}\left|\symbol L_{i}\right|_{\symbol 0},$
thus
\[
L\left(z\right)=-\frac{s}{\left(1-z\right)R\left(z\right)}.
\]
Similarly, let $T\left(z\right):=\sum_{n\geq0}\left|\symbol T_{n}\right|z^{n}$,
which is the generating function for the total number of nodes of
row $n$ in $\tree T_{R}$ excluding the left-most node labelled $\symbol r$.
Since $\left|\symbol T_{n}\right|=\sum_{i=0}^{n-1}\left|\symbol L_{n}\right|$,
we have
\[
T\left(z\right)=-\frac{sz}{\left(1-z\right)^{2}R\left(z\right)}.
\]

\subsection{Recurrences with finitely many terms}

The examples of morphic recurrences analyzed thus far have only finitely
many terms. Saying that a recurrence $a_{R}\left(n\right)$ of the
form (\ref{eq:main_thm}) has finitely many terms is equivalent to
saying that $R=\left\langle s,r_{1},r_{2,}\ldots\right\rangle $ is
eventually constant; this follows directly from the statement of Theorem
\ref{thm:main}. If $k$ is the largest integer such that $r_{k}\neq r_{k+1}$,
then the generating function $R\left(z\right)$ is rational. Indeed,
\[
R\left(z\right)=r_{0}+r_{1}z+\ldots+r_{k}z^{k}+\frac{r_{k+1}z^{k+1}}{1-z}.
\]
It follows that $T\left(z\right)$ is also rational, and in particular
\[
T\left(z\right)=-\frac{sz}{\left(1-z\right)q\left(z\right)},
\]
where
\begin{equation}
q\left(z\right)=\left(1-z\right)\left(r_{0}+r_{1}z+\ldots+r_{k}z^{k}\right)+r_{k+1}z^{k+1}.\label{eq:gf_poly}
\end{equation}

\subsection{An example of a recurrence with infinitely many terms}

Theorem \ref{thm:main} implies that nested recurrences can have infinitely
many terms and still be well-defined. Suppose, for instance, that
$R=\left\langle 1,1,2,3,4,\ldots\right\rangle $. The recurrence given
by this sequence is
\[
a\left(n\right)=n-1-a\left(n-1\right)-a\left(a\left(n-2\right)\right)-a\left(a\left(a\left(n-3\right)\right)\right)-\ldots
\]
with $a\left(n\right)=0$ for $n\leq0$.

For this particular example we have $R\left(z\right)=-1+z/\left(1-z\right)^{2}$,
and hence
\[
T\left(z\right)=\frac{z}{1-3z+z^{2}}.
\]
The coefficients of this generating function are $0,1,3,8,21,55,\ldots$,
the even Fibonacci numbers starting with $F_{0}=0$. Hence, the length
of the word $\symbol T_{n}$ is $F_{2n}$. We can use this observation
to prove the following statement:
\begin{prop}
Let $F_{n}$ denote the $n$\textsuperscript{th} Fibonacci number.
For $n\geq1$,
\[
a\left(F_{2n}\right)=F_{2n-2}.
\]
\end{prop}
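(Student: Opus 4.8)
The plan is to reduce the statement to counting nonzero symbols in a prefix of $\symbol W=\symbol r^{-1}\sigma^{\infty}\left(\symbol r\right)$, and then to carry out that count directly from the shape of $\sigma$. The starting point is Lemma \ref{lem:Prefix}, which gives $\left|\symbol W_{n}\right|_{\symbol 0}=n-a\left(n\right)$; since $\symbol W_{n}$ has length $n$, this says that $a\left(n\right)$ is exactly the number of \emph{nonzero} symbols among the first $n$ symbols of $\symbol W$. I would next observe that each $\symbol T_{m}=\symbol r^{-1}\sigma^{m}\left(\symbol r\right)$ is a prefix of $\symbol W$, because $\sigma^{m}\left(\symbol r\right)$ is a prefix of $\sigma^{\infty}\left(\symbol r\right)$ and both begin with $\symbol r$, and that its length is $\left|\symbol T_{m}\right|=F_{2m}$ by the computation of $T\left(z\right)$ just above. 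Taking $m=n$ identifies $\symbol W_{F_{2n}}$ with $\symbol T_{n}$, so that $a\left(F_{2n}\right)$ equals the number of nonzero symbols in $\symbol T_{n}$.

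It then remains to count the nonzero symbols of $\symbol T_{n}$. The key observation is that for every $\symbol x\in\Sigma$ the word $\sigma\left(\symbol x\right)$ has a nonzero first symbol ($\symbol r$, or $\intsymbol{j+1}$) followed only by copies of $\symbol 0$; this is essentially the fact already used in the proof of Lemma \ref{lem:Prefix}. Hence each of the $\left|\sigma^{n-1}\left(\symbol r\right)\right|$ nodes of row $n-1$ contributes exactly one nonzero symbol to row $n$, so the number of nonzero symbols in $\sigma^{n}\left(\symbol r\right)$ equals $\left|\sigma^{n-1}\left(\symbol r\right)\right|=1+\left|\symbol T_{n-1}\right|=1+F_{2n-2}$. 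Removing the leading $\symbol r$ of $\sigma^{n}\left(\symbol r\right)$, which is itself nonzero, leaves $\symbol T_{n}$ with exactly $F_{2n-2}$ nonzero symbols.

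Combining the two steps yields $a\left(F_{2n}\right)=F_{2n-2}$ directly, requiring no Fibonacci identity. A purely generating-function alternative would instead compute $\left|\symbol T_{n}\right|_{\symbol 0}=\sum_{i=0}^{n-1}\left|\symbol L_{i}\right|_{\symbol 0}$ from $N\left(z\right)=-s/R\left(z\right)=\left(1-z\right)^{2}/\left(1-3z+z^{2}\right)$, giving the generating function $z\left(1-z\right)/\left(1-3z+z^{2}\right)$ whose coefficients are $F_{2n-1}$, and then conclude via $a\left(F_{2n}\right)=F_{2n}-F_{2n-1}=F_{2n-2}$. Neither route presents a genuine obstacle: once $\left|\symbol T_{n}\right|=F_{2n}$ is in hand, the proposition is an essentially immediate consequence of Lemma \ref{lem:Prefix}. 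The one point that needs a little care is the first step, namely verifying that $\symbol T_{n}$ is precisely the length-$F_{2n}$ prefix $\symbol W_{F_{2n}}$ of $\symbol W$, which rests on the prolongability of $\sigma$ on $\symbol r$ established in Lemma \ref{lem:an-is-well-defined.}.
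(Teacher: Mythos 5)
Your proof is correct, but it takes a different route from the paper's. The paper argues directly on the tree: the right-most node $v$ of row $n$ satisfies $\ell\left(v\right)=\left|\symbol T_{n}\right|=F_{2n}$, its parent is the right-most node of row $n-1$, so $\ell\left(\mbox{parent}\left(v\right)\right)=\left|\symbol T_{n-1}\right|=F_{2n-2}$, and the defining relation $a\left(\ell\left(v\right)\right)=\ell\left(\mbox{parent}\left(v\right)\right)$ finishes in one line. You instead pass through Lemma \ref{lem:Prefix}, identify $\symbol W_{F_{2n}}$ with $\symbol T_{n}$, and count the nonzero symbols of $\symbol T_{n}$ by noting that each letter of $\sigma^{n-1}\left(\symbol r\right)$ contributes exactly one nonzero letter to $\sigma^{n}\left(\symbol r\right)$ --- which is precisely the bijection already used inside the proof of Lemma \ref{lem:Prefix}, so in effect you re-derive locally what the paper gets for free from the definition of $a$. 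Both arguments hinge on the same external input, namely $\left|\symbol T_{n}\right|=F_{2n}$ from the generating function $T\left(z\right)=z/\left(1-3z+z^{2}\right)$, and both in fact establish the more general identity $a\left(\left|\symbol T_{n}\right|\right)=\left|\symbol T_{n-1}\right|$ valid for any $R$. Your generating-function variant ($\left|\symbol T_{n}\right|_{\symbol 0}=F_{2n-1}$, hence $a\left(F_{2n}\right)=F_{2n}-F_{2n-1}$) is also correct and is a genuinely third route; it costs a small computation with $N\left(z\right)=-s/R\left(z\right)$ but avoids any combinatorial bijection. The paper's version remains the most economical.
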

\begin{proof}
Let $v$ be the right-most node in $\tree T$ on row $n$. There are
$\left|\symbol T_{n}\right|$ nodes to the left of $v$. The parent
of $v$ is also a right-most node in $\tree T$, and so there are
$\left|\symbol T_{n-1}\right|$ nodes to the left of it. Then by definition,
\[
a\left(F_{2n}\right)=a\left(\left|\symbol T_{n}\right|\right)=\left|\symbol T_{n-1}\right|=F_{2n-2}.
\]

\end{proof}
Somewhat mysteriously, it also appears that $a\left(n\right)$ has
the same shifting property on the odd Fibonacci numbers; it seems
$a\left(F_{2n+1}\right)=F_{2n-1}$ for $n\geq1$. We do not have a
proof of this claim.

\section{Asymptotics}

In this section, we analyze the asymptotics of recurrences of the
form (\ref{eq:main_thm}) which have finitely many terms. In particular,
we give sufficient and necessary conditions for such a recurrence
to be asymptotically linear and determine its limiting slope. This
question has been looked at by Kiss and Zay \cite{kiss_generalization_1992}
in the particular case when $R=\left\langle 1,0,\ldots,0,1,1,\ldots\right\rangle $
with $k$ zeros. They show that $\lim_{n\rightarrow\infty}a_{R}\left(n\right)/n$
equals the unique positive root of the polynomial $x^{k}+x-1$. 

When $a_{R}\left(n\right)$ has finitely many terms, that is, when
it is the case that there exists some $k$ such that $r_{i}=r_{k+1}$
for all $i\geq k+1$, then, as done previously in this paper, we can
indentify the symbols $\left[k\right],\left[k+1\right],\left[k+2\right],\ldots$
as just one symbol, namely $\left[k\right]$. This allows us to define
$\sigma_{R}$ on a \emph{finite} alphabet $\Sigma_{k}:=\left\{ \symbol r,\symbol 0,\symbol 1,\symbol 2,\ldots,\left[k\right]\right\} $
as follows:
\begin{align*}
\symbol r & \rightarrow\symbol{r0}^{s}\\
\intsymbol j & \rightarrow\intsymbol{j+1}\symbol 0^{r_{j+1}};\quad0\leq j<k\\
\intsymbol k & \rightarrow\intsymbol k0^{r_{k+1}}.
\end{align*}
As before, the underlying structure of $\tree T_{R}$, upon which
the definition of $a_{R}\left(n\right)$ is based, remains unchanged
in this alternative definition of $\sigma_{R}$ which we now use.
\begin{thm}
Let $R=\left\langle s,r_{1},r_{2},\ldots\right\rangle ,$ $s\geq1$
be a nonnegative integer sequence which is eventually constant; that
is, there exists some $k\geq1$ such that $r_{i}=r_{k+1}$ for all
$i\geq k+1$. Then the limit
\[
\lim_{n\rightarrow\infty}\frac{a_{R}\left(n\right)}{n}
\]
exists if and only if at least one of the following two conditions
holds:
\begin{enumerate}
\item $r_{1}+r_{2}+r_{3}+\ldots=0$ or $1$.
\item $\gcd\left\{ i\geq1:r_{i}\geq1\right\} =1$.
\end{enumerate}
If it exists, it is equal to the smallest positive root of the polynomial
\[
\left(1-x\right)\left(r_{0}+r_{1}x+\ldots+r_{k}x^{k}\right)+r_{k+1}x^{k+1}.
\]

\end{thm}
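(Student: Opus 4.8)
The plan is to recast the statement as a question about the frequency of the letter $\symbol 0$ in $\symbol W=\symbol r^{-1}\sigma^{\infty}(\symbol r)$. By Lemma \ref{lem:Prefix} we have $a(n)=n-|\symbol W_n|_{\symbol 0}$, so $a_R(n)/n=1-|\symbol W_n|_{\symbol 0}/n$, and the limit exists precisely when the density of $\symbol 0$'s along the prefixes of $\symbol W$ converges. First I would control this density along the distinguished subsequence of lengths $n=|\symbol T_m|$, where $\symbol T_m=\symbol r^{-1}\sigma^{m}(\symbol r)$ is itself a prefix of $\symbol W$ and $a(|\symbol T_m|)=|\symbol T_{m-1}|$ (the right-most node of row $m$ has the right-most node of row $m-1$ as its parent). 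Since each node has exactly one non-$\symbol 0$ child, $|\symbol T_m|_{\symbol 0}=|\symbol T_m|-|\symbol T_{m-1}|$, that is $Z(z)=(1-z)T(z)$; thus this subsequence is governed entirely by the poles of the rational function $T(z)=-sz/((1-z)q(z))$ with $q$ as in (\ref{eq:gf_poly}).

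Next I would locate the dominant singularity and match the three regimes to the hypotheses. Write $q(z)=(1-z)\widetilde R(z)+r_{k+1}z^{k+1}$ with $\widetilde R(z)=\sum_{i=0}^{k}r_iz^i$, let $\rho$ be the smallest positive root of $q$ (a dominant pole by Pringsheim, since $T$ has nonnegative coefficients), and set $p:=\gcd\{i\ge1:r_i\ge1\}$. If $r_{k+1}\ge1$, the incidence matrix of $\sigma$ on $\{\symbol r,\symbol 0,\ldots,\intsymbol k\}$ is primitive (the loop $\intsymbol k\to\intsymbol k$ forces aperiodicity), one has $p=1$, and $\rho<1$ is a simple dominant pole. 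If $r_{k+1}=0$ then $\Sigma:=\sum_{i\ge1}r_i$ is finite and $q=(1-z)\widetilde R$, with $\widetilde R(0)=-1<0$ and $\widetilde R(1)=\Sigma-1$: when $\Sigma\le1$ (condition (1)) the only positive root is $z=1$ and growth is polynomial, while when $\Sigma\ge2$ there is a root $\rho\in(0,1)$, and since $\widetilde R(z)=g(z^{p})$ for some polynomial $g$, the dominant poles are exactly $\rho,\rho\omega,\ldots,\rho\omega^{p-1}$ with $\omega=e^{2\pi i/p}$. Hence $p=1$ (condition (2)) yields a unique dominant pole and $p\ge2$ yields $p$ of them.

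For existence and the value I would proceed as follows. Whenever there is a unique dominant simple pole at $\rho$ (the cases $r_{k+1}\ge1$, or $r_{k+1}=0$ with $\Sigma\ge2$ and $p=1$), singularity analysis gives $|\symbol T_m|\sim C\rho^{-m}$, so $|\symbol T_{m-1}|/|\symbol T_m|\to\rho$ and $a(|\symbol T_m|)/|\symbol T_m|\to\rho$ along the subsequence. To upgrade this to the full limit I would use that $\sigma$ is primitive on its strongly connected core (the letters lying on a cycle through $\symbol 0$), aperiodicity of which is exactly condition (2); by the standard frequency theorem for fixed points of primitive morphisms \cite{allouche_automatic_2003}, every factor---in particular every prefix of each block $\symbol L_m=\sigma^{m}(\symbol 0^s)$---has a uniform, well-defined frequency of $\symbol 0$, so the within-block interpolation cannot perturb the limit. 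In the remaining existence case, condition (1), the growth is polynomial, so $|\symbol T_{m+1}|/|\symbol T_m|\to1$ and $|\symbol L_m|=o(|\symbol T_m|)$; each block is then negligible and the prefix density is squeezed to its boundary value, which one checks directly to be $0$, giving $a_R(n)/n\to1$, the smallest positive root of $q$ in this case. In every existence case the value is thus the smallest positive root, as claimed.

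For the converse I would show that when both conditions fail---forcing $r_{k+1}=0$, $\Sigma\ge2$, and $p\ge2$---no limit exists. Here the $p\ge2$ simple dominant poles $\rho\omega^{j}$ give $|\symbol T_m|=\rho^{-m}\Psi(m)+o(\rho^{-m})$ with $\Psi$ periodic of period $p$; because the numerator $-sz$ does not vanish at any $\rho\omega^{j}$ and $\rho\omega^{j}\neq1$, all residues are nonzero, so $\Psi$ is genuinely non-constant. Then $|\symbol T_m|_{\symbol 0}/|\symbol T_m|\to1-\rho\,\Psi(m-1)/\Psi(m)$ depends on $m\bmod p$, so even the subsequence $a(|\symbol T_m|)/|\symbol T_m|$ fails to converge, and a fortiori $a_R(n)/n$ has no limit. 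The hard part, and the step I expect to require the most care, is the within-block interpolation in the exponential-growth existence case (condition (2)): the distinguished subsequence only samples row boundaries, and passing to arbitrary prefixes genuinely needs the uniform-frequency (linear-recurrence / unique-ergodicity) property of primitive substitutions, not merely the generating-function asymptotics. A secondary delicacy is verifying that $\Psi$ is truly non-constant in the periodic case, so that the oscillation cannot accidentally cancel and produce a spurious limit.
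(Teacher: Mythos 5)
Your overall architecture closely parallels the paper's: reduce to the density of $\symbol 0$ in prefixes of $\symbol W$ via Lemma \ref{lem:Prefix}, extract the value of the limit from the rational generating function $T\left(z\right)$ together with the identity $a\left(\left|\symbol T_{m}\right|\right)=\left|\symbol T_{m-1}\right|$, dispose of condition (1) by a direct polynomial-growth estimate, and attack condition (2) through primitivity of an incidence matrix. The step that does not survive scrutiny is the within-block interpolation under condition (2) in the case $r_{k+1}=0$. There you invoke the frequency theorem for fixed points of \emph{primitive} morphisms to claim that every factor of the blocks $\symbol L_{m}=\sigma^{m}\left(\symbol 0^{s}\right)$ has a uniform $\symbol 0$-frequency. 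But when $r_{k+1}=0$ the letter $\intsymbol k$ satisfies $\sigma\left(\intsymbol k\right)=\intsymbol k$, occurs in $\symbol L_{m}$ with positive density, and lies outside the strongly connected core, so the morphism on the alphabet that actually appears in $\symbol L_{m}$ is not primitive; deleting $\intsymbol k$ (passing to $\sigma_{2}$) changes the words and hence the $\symbol 0$-density of their prefixes. Worse, uniform factor frequencies genuinely fail here: for the Conolly-type example $R=\left\langle 1,2,0,0,\ldots\right\rangle$ one has $\sigma^{m}\left(\symbol 0\right)=\intsymbol 1\,\sigma^{m-1}\left(\symbol 0\right)^{2}$, so $\sigma^{m}\left(\symbol 0\right)$ begins with $\intsymbol 1^{m}$ and there are arbitrarily long factors with $\symbol 0$-frequency zero. (The prefix densities of $\symbol W$ do still converge, because these bad factors are only logarithmically long relative to their position --- but that is exactly what must be proved.) The paper closes this hole with Proposition \ref{prop:Saari}, which replaces primitivity of $M_{\sigma}$ by a Jordan-block domination condition for the Perron--Frobenius eigenvalue $\lambda$; verifying that condition is precisely why the paper factors $p_{M_{\sigma}}\left(x\right)$ as $\left(x-1\right)^{2}p_{M_{2}}\left(x\right)$ and proves $\lambda>1$. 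Your argument is complete as written only when $r_{k+1}\geq1$ (where condition (2) is automatic and the core is the whole non-$\symbol r$ alphabet); note also that the full matrix $M_{\sigma}$ is never primitive, since no letter other than $\symbol r$ produces $\symbol r$, though you implicitly correct this later.

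Your converse is a genuinely different, and I believe sound, route. The paper compresses $R$ to $R'=\left\langle s,r_{h},r_{2h},\ldots\right\rangle$, computes the limiting $\symbol 0$-density of $\sigma^{nh}\left(\symbol r\right)$ and of $\sigma^{nh+1}\left(\symbol r\right)$ in terms of $\beta=\lim a_{R'}\left(n\right)/n$ and $h$, and equates the two expressions to force $h=1$ or $\beta=1$. You instead read off from the $p$ simple dominant poles $\rho\omega^{j}$ of $T\left(z\right)$ that $\left|\symbol T_{m}\right|\sim\rho^{-m}\Psi\left(m\right)$ with $\Psi$ periodic and, since the residues are nonzero, non-constant, so that $a\left(\left|\symbol T_{m}\right|\right)/\left|\symbol T_{m}\right|=\left|\symbol T_{m-1}\right|/\left|\symbol T_{m}\right|$ oscillates. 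This works, but do carry out the two delicacies you flag: that $\Psi\left(m\right)>0$ for all $m$ (so the ratios are defined), and that constancy of $\Psi\left(m-1\right)/\Psi\left(m\right)$ forces constancy of $\Psi$ (a positive $p$-periodic geometric sequence must be constant). With the interpolation step repaired as above, the rest of the proposal is correct.
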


\subsection{Results on nonnegative matrices}

Before giving the proof, we review some preliminaries on nonnegative
matrices. See \cite[Ch. 8]{allouche_automatic_2003} or \cite[Ch. 8]{meyer_matrix_2000}
for further details. A nonnegative square matrix $M$ is said to be
\emph{reducible} if there exists square matrices $A$ and $B$, possibly
of different sizes, and a permutation matrix $P$ such that 
\[
PMP^{T}=\begin{pmatrix}A & C\\
0 & B
\end{pmatrix},
\]
where $C$ is an arbitrary matrix and $0$ is a zero matrix. $M$
is said to be \emph{irreducible} if it is not reducible. If $M$ is
integral and interpreted as the adjancency matrix of a digraph $D$,
then a sufficient and necessary condition for the irreducibility of
$M$ is that $D$ is strongly connected \cite[p. 671]{meyer_matrix_2000}.

Denote the characteristic polynomial of a matrix $M$ by $p_{M}\left(x\right)$.
If $M$ is reducible with $A,B$ as above, then
\[
p_{M}\left(x\right)=p_{A}\left(x\right)p_{B}\left(x\right).
\]
Hence, the eigenvalues of $M$ are just the combined eigenvalues of
$A$ and $B$.

Given a nonnegative square matrix $M$, there exists an eigenvalue
$\lambda$ called the Perron-Frobenius eigenvalue which is equal to
the largest modulus of all eigenvalues of $M$. That $\lambda$ is
itself an eigenvalue is a consequence of the Perron-Frobenius theorem.
If $M$ is irreducible with Perron-Frobenius eigenvalue $\lambda$,
then there exists a positive integer $h$ such that the collection
\[
\left\{ \lambda,\lambda\omega,\lambda\omega^{2},\ldots,\lambda\omega^{h-1}\right\} ,\mbox{ }\omega:=e^{2\pi i/h},
\]
is the collection of all eigenvalues of $M$ with modulus $\lambda$.
Moreover, every eigenvalue in this collection is simple \cite[Theorem 8.3.10]{allouche_automatic_2003}.
The number $h$ is called the \emph{index of imprimitivity} \emph{of}
$M$. If $p_{M}\left(x\right)$ is written as 
\[
p_{M}\left(x\right)=c_{n}+c_{n-1}x+c_{n-2}x^{2}+\ldots+c_{1}x^{n-1}+x^{n},
\]
then $h=\gcd\left\{ j:c_{j}\neq0\right\} $ \cite[Theorem 8.3.9]{allouche_automatic_2003}.

An even stronger notion than irreducibility is that of primitivity.
A nonnegative square matrix $M$ is said to be \emph{primitive }if
there exists some integer $n\geq1$ such that $M^{n}$ has only positive
entries. If $M$ is integral and interpreted as the adjancency matrix
of a digraph $D$, then a sufficient and necessary condition for the
primitivity of $M$ is that there exists an integer $n$ such that
between any two vertices $u,v$ of $D$ there exists a walk starting
at $u$ and ending at $v$ that has length $n$.

A key property of primitive matrices is that the Perron-Frobenius
eigenvalue $\lambda$ of $M$ strictly dominates in modulus all other
eigenvalues of $M$. All primitive matrices are irreducible, and the
index of imprimitivity of a primitive matrix is 1. The converse is
also true; an irreducible matrix with an index of imprimitivity of
1 is necessarily primitive \cite[Theorem 8.3.10]{allouche_automatic_2003}.

The proof of the theorem in this section relies on a result due to
K. Saari \cite{saari_frequency_2006}, part of which is given in the
following proposition. For any morphism $\gamma:\Gamma^{*}\rightarrow\Gamma^{*}$
defined on a finite alphabet $\Gamma=\left\{ \symbol s_{1},\ldots,\symbol s_{k}\right\} $,
we define the \emph{incidence matrix of $\gamma$}, denoted $M_{\gamma}$,
to be the matrix 
\[
\left(M_{\gamma}\right)_{i,j}:=\left|\gamma\left(\symbol s_{i}\right)\right|_{\syms_{j}}.
\]

\begin{prop}
\label{prop:Saari}Let $\gamma$ be a nonerasing morphism on a finite
alphabet $\Gamma$ such that:
\begin{enumerate}
\item There exists an $\symbol s\in\Gamma$ such that $\gamma\left(\symbol s\right)=\symbol{sZ}$
for some nonempty word $\symbol Z$.
\item In the Jordan canonical form of $M_{\gamma}$, there is a Jordan block
associated with the Perron-Frobenius eigenvalue $\lambda$ of $M_{\sigma}$
which is strictly larger in dimension than any other Jordan block
associated with an eigenvalue of modulus $\lambda$.
\end{enumerate}
Then letting $\symbol W_{n}$ denote the length-$n$ prefix of $\gamma^{\infty}\left(\symbol s\right)$,
the limit
\[
\lim_{n\rightarrow\infty}\frac{\left|\symbol W_{n}\right|_{\symbol t}}{n}
\]
exists for all $\symbol t\in\Gamma$.
\end{prop}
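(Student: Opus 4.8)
The plan is to recast the counting question as a statement about the powers of $M:=M_\gamma$ and then to transport the resulting asymptotics from the special prefixes $\gamma^m(\symbol s)$ to an arbitrary prefix by desubstitution. Throughout write $\Gamma=\{\symbol s_1,\dots,\symbol s_k\}$, let $\mathbf 1$ be the all-ones column vector, and attach to each word $\symbol U$ its letter-count row vector $\mathbf c(\symbol U):=(|\symbol U|_{\symbol s_1},\dots,|\symbol U|_{\symbol s_k})$. Directly from the definition of the incidence matrix one has $\mathbf c(\gamma(\symbol U))=\mathbf c(\symbol U)\,M$, hence $\mathbf c(\gamma^m(\symbol U))=\mathbf c(\symbol U)\,M^m$; since $\gamma$ is nonerasing, $M\mathbf 1\ge\mathbf 1$ entrywise and so the Perron--Frobenius eigenvalue $\lambda$ is at least $1$, while prolongability forces $|\gamma^m(\symbol s)|\to\infty$. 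In this language $|\gamma^m(\symbol U)|_{\symbol t}$ is a coordinate of $\mathbf c(\symbol U)M^m$ and $|\gamma^m(\symbol U)|=\mathbf c(\symbol U)M^m\mathbf 1$, so everything reduces to the growth of $M^m$.

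First I would extract the leading term of $M^m$ from the Jordan form of $M$. Each entry of $M^m$ is a finite combination of terms $\binom{m}{j}\mu^{m-j}$ over the eigenvalues $\mu$, and if $d$ is the size of the largest Jordan block attached to $\lambda$ then hypothesis~(2) is exactly the statement that the term $\binom{m}{d-1}\lambda^{m-d+1}$ strictly dominates all others; in particular no eigenvalue of modulus $\lambda$ other than $\lambda$ survives at top order, so there is no oscillation. The structure of a Jordan block shows that the matrix coefficient of this dominant term is rank one, equal to $\mathbf p\,\mathbf q^{\mathsf T}$ with $\mathbf p$ the nonnegative right Perron eigenvector and $\mathbf q^{\mathsf T}$ the nonnegative left Perron eigenvector. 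Thus for any fixed word $\symbol U$ with $\mathbf c(\symbol U)\cdot\mathbf p>0$,
\[
\frac{|\gamma^m(\symbol U)|_{\symbol t}}{|\gamma^m(\symbol U)|}\longrightarrow f_{\symbol t}:=\frac{q_{\symbol t}}{\mathbf q^{\mathsf T}\mathbf 1},
\]
a limit that is independent of $\symbol U$. Taking $\symbol U=\symbol s$ already proves the proposition along the subsequence $n=|\gamma^m(\symbol s)|$.

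The main work is to go from these aligned lengths to all $n$, and this is the step I expect to be the real obstacle. Because $\gamma^\infty(\symbol s)$ is a fixed point, any prefix factors uniquely as $\symbol W_n=\gamma(\symbol W_{n_1})\,\symbol u_1$ where $\symbol u_1$ is a proper prefix of some $\gamma(\symbol s_j)$, so $|\symbol u_1|$ is bounded and $n_1<n$; iterating the desubstitution until the argument is empty exhibits $\symbol W_n$ as a concatenation of blocks $\gamma^i(\symbol u_{i+1})$ for $0\le i\le h-1$ with $h=O(\log_\lambda n)$, the remainders $\symbol u_{i+1}$ ranging over the finite set of proper prefixes of the words $\gamma(\symbol s_j)$. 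Telescoping gives
\[
\mathbf c(\symbol W_n)=\sum_{i=0}^{h-1}\mathbf c(\symbol u_{i+1})\,M^i .
\]
The coefficients $\mathbf c(\symbol u_{i+1})$ are uniformly bounded but depend on $n$, so the subsequence argument cannot be applied termwise in an obvious way.

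What rescues the argument is the rank-one form of the dominant term: by the previous paragraph the leading part of each summand $\mathbf c(\symbol u_{i+1})M^i$ is a scalar multiple of the single fixed row vector $\mathbf q^{\mathsf T}$, namely $\binom{i}{d-1}\lambda^{i-d+1}(\mathbf c(\symbol u_{i+1})\cdot\mathbf p)\,\mathbf q^{\mathsf T}$. Hence $\mathbf c(\symbol W_n)=\kappa(n)\,\mathbf q^{\mathsf T}+E(n)$ for a scalar $\kappa(n)\ge 0$, where $E(n)$ collects the subdominant Jordan contributions. Since the unique maximal block strictly dominates, $\|E(n)\|$ is smaller than the leading scale by a factor $O(1/h)=o(1)$, so $E(n)=o(n)$; and because the top block $\symbol u_h=\symbol W_{n_{h-1}}$ begins with $\symbol s$ --- which generates the whole word and therefore reaches the critical strongly connected component, forcing $p_{\symbol s}>0$ --- the length is dominated by a heavy block and $\kappa(n)\asymp n$. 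Dividing the displayed identity by $n=\mathbf c(\symbol W_n)\mathbf 1=\kappa(n)(\mathbf q^{\mathsf T}\mathbf 1)+o(n)$ cancels the $n$-dependent scalar $\kappa(n)$ and leaves $|\symbol W_n|_{\symbol t}/n\to f_{\symbol t}$ for every $\symbol t$. The delicate point, and where I would spend the most care, is precisely this error bookkeeping: controlling the $O(\log n)$ variable remainders against the non-leading Jordan data and verifying that the heavy blocks indeed carry a $1-o(1)$ fraction of the length.
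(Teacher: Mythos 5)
A preliminary remark: the paper does not prove this proposition at all --- it is imported from Saari \cite{saari_frequency_2006} --- so there is no in-paper argument to compare yours against. Your strategy (Jordan analysis of $M_\gamma$ to get the frequencies along the aligned prefixes $\gamma^m\left(\symbol s\right)$, then desubstitution $\symbol W_n=\gamma^{h-1}\left(\symbol u_h\right)\cdots\gamma\left(\symbol u_2\right)\symbol u_1$ to transfer them to all $n$) is the standard and essentially right one, but two steps you lean on are genuinely open, and one of them cannot be closed from the stated hypotheses.

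The first gap is the claim $p_{\symbol s}>0$. Your justification --- that $\symbol s$ ``reaches the critical strongly connected component'' --- does not follow from hypotheses (1)--(2): the entries of the right vector $\mathbf p$ attached to the dominant Jordan chain are positive exactly on the letters having access to the class realizing $\lambda$, and nothing forces that class to be reachable from $\symbol s$. Concretely, take $\Gamma=\left\{ \symbol s,\symbol a,\symbol b,\symbol c\right\} $ with $\gamma\left(\symbol s\right)=\symbol s\symbol a$, $\gamma\left(\symbol a\right)=\symbol b\symbol b$, $\gamma\left(\symbol b\right)=\symbol a\symbol a$, $\gamma\left(\symbol c\right)=\symbol c^{5}$. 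The eigenvalues of $M_\gamma$ are $1,2,-2,5$, all simple, and $5$ is the unique eigenvalue of modulus $5$, so conditions (1) and (2) hold (the latter vacuously); yet $\gamma^{\infty}\left(\symbol s\right)=\symbol s\,\symbol a\,\symbol b^{2}\,\symbol a^{4}\,\symbol b^{8}\cdots$, and $\left|\symbol W_{n}\right|_{\symbol a}/n$ oscillates between $1/3$ and $2/3$. So the proposition as transcribed here is actually false, your formula $f_{\symbol t}=q_{\symbol t}/\left(\mathbf q^{\mathsf T}\mathbf 1\right)$ fails, and the step $p_{\symbol s}>0$ is irreparable without the additional (intended, and true in this paper's application) hypothesis that every letter of $\Gamma$ occurs in $\gamma^{\infty}\left(\symbol s\right)$, or equivalently that $M_\gamma$ is restricted to the reachable sub-alphabet.

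The second gap is the error bookkeeping you yourself flag: it is not closed, and the specific bounds you assert fail when $\lambda=1$, a case the hypotheses permit (then $d\geq2$ and the word grows polynomially, e.g.\ $\symbol s\rightarrow\symbol s\symbol a$, $\symbol a\rightarrow\symbol a\symbol b$, $\symbol b\rightarrow\symbol b$). In that regime $h$ is not $O\left(\log_{\lambda}n\right)$ but a power of $n$, no single block $\gamma^{i}\left(\symbol u_{i+1}\right)$ carries a $1-o\left(1\right)$ fraction of the length, and the summed subdominant contribution $\sum_{i<h}\binom{i}{d-2}$ has the same order $\binom{h}{d-1}$ as your lower bound on $\kappa\left(n\right)$, so the per-term gain of $O\left(1/i\right)$ does not survive summation and $E\left(n\right)=o\left(n\right)$ does not follow from what is written; one needs genuine control on how often the remainders $\symbol u_{i}$ are nonempty. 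For $\lambda>1$ your estimate does go through, because the geometric weights concentrate all the sums at $i=h-1$.
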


\subsection{Proof of theorem}

To simplify the proof of the theorem a little, we only consider the
case when $s=1$. The arguments are the same in the general case.
As before we drop the subscripts from $a_{R}\left(n\right)$ and $\sigma_{R}$. 
\begin{proof}
We begin by proving that the limit, when it exists, is the smallest
positive root of the given polynomial. We have by Theorem \ref{thm:main}
that
\[
a\left(n\right)=n-1-\sum_{i,j\geq1}c_{i,j}a^{i}\left(n-j\right)
\]
for $n\geq1$. Dividing both sides by $n$ and taking limits, we get
\begin{align*}
\alpha:=\lim_{n\rightarrow\infty}\frac{a\left(n\right)}{n} & =\lim_{n\rightarrow\infty}\frac{1}{n}\left(n-1-\sum_{i,j\geq1}\left(r_{i,j}-r_{i-1,j}\right)a^{i}\left(n-j\right)\right)\\
 & =1-\sum_{i,j\geq1}\left(r_{i,j}-r_{i-1,j}\right)\alpha^{i}\\
 & =1-\sum_{i\geq1}r_{i}\alpha^{i}+\sum_{i\geq1}r_{i}\alpha^{i+1}\\
 & =1-r_{1}\alpha-\sum_{i=1}^{k}\left(r_{i+1}-r_{i}\right)\alpha^{i+1}.
\end{align*}
Hence, $\alpha$ is a root of the polynomial
\begin{align*}
q\left(x\right) & :=1-\sum_{i=0}^{k}\left(r_{i+1}-r_{i}\right)x^{i+1}\\
 & =r_{k+1}x^{k+1}+\left(1-x\right)\sum_{i=0}^{k}r_{i}.
\end{align*}
To see that $\alpha$ is the smallest such root, note that $q\left(x\right)$
is the polynomial (\ref{eq:gf_poly}) which appears in the denominator
of $T\left(z\right)$. Considering $T\left(z\right)$ as a complex
rational function, then, observe that $T\left(z\right)$ has a nonzero
radius of convergence $c$. If $u$ is the smallest positive root
of $q\left(x\right)$, then $c\leq u$ since $u$ is a pole of $T\left(z\right)$.
The ratio test implies
\[
\lim_{n\rightarrow\infty}\frac{\left|\symbol T_{n-1}\right|}{\left|\symbol T_{n}\right|}=c.
\]
By Lemma \ref{lem:Prefix}, however, $\left|\symbol T_{n-1}\right|=\left|\symbol T_{n}\right|-\left|\symbol T_{n}\right|_{\symbol 0}=a\left(\left|\symbol T_{n}\right|\right)$.
Hence, the LHS is just $\alpha$.

It is worth noting that $q\left(x\right)$ has a root that is strictly
less than 1 if $r_{1}+r_{2}+\ldots\geq2$. Thus, the only time we
can have $\alpha=1$ is when $r_{1}+r_{2}+\ldots=0$ or $1$. We use
this fact later.

\phantom{}

We next prove that each of the given conditions imply the existence
of the limit. Suppose first that condition (1) holds. If $r_{1}+r_{2}+\ldots=0$,
then $a\left(n\right)=n-1$ and clearly the theorem holds. Otherwise,
suppose $r_{1}+r_{2}+\ldots=1$; that is, there exists some $h\geq1$
such that $r_{h}=1$ and $r_{i}=0$ for all positive $i\neq h$. This
is something of a degenerate case, and the morphism $\sigma$ has
a very simple structure. In particular, if $i$ is an integer written
as $i=qh+r$ for integers $q,r$ with $0\leq r<h$, then 
\[
\sigma^{i}\left(\symbol 0\right)=\left[h\right]^{q}\left[r\right].
\]
It follows that $\left|\sigma^{i}\left(\symbol 0\right)\right|$ is
equal to $\lfloor i/h\rfloor+1$.

Now let $m$ be the smallest integer such that
\[
n\leq\left|\symbol 0\sigma\left(\symbol 0\right)\sigma^{2}\left(\symbol 0\right)\ldots\sigma^{mh-1}\left(\symbol 0\right)\right|.
\]
The above inequality holds if $m=\left\lceil \sqrt{2n/h}\right\rceil $,
since
\[
\left|\symbol 0\sigma\left(\symbol 0\right)\ldots\sigma^{mh-1}\left(\symbol 0\right)\right|=\sum_{i=0}^{mh-1}\left(\lfloor i/h\rfloor+1\right)=\sum_{i=0}^{m-1}\left(i+1\right)h=\frac{hm\left(m+1\right)}{2}\geq\frac{hm^{2}}{2}\geq n.
\]
It follows that
\[
m\leq\left\lceil \sqrt{2n/h}\right\rceil .
\]
 Recall Lemma \ref{lem:Prefix}, which states that
\[
a\left(n\right)=n-\left|\symbol W_{n}\right|_{\symbol 0}
\]
where $\symbol W_{n}$ is the length-$n$ prefix of 
\[
\symbol 0\sigma\left(\symbol 0\right)\sigma^{2}\left(\symbol 0\right)\sigma^{3}\left(\symbol 0\right)\ldots=\symbol r^{-1}\sigma^{\infty}\left(\symbol r\right).
\]
 For any $i\geq0$, $\sigma^{i}\left(\symbol 0\right)$ has at most
one $\symbol 0$, and so 
\begin{align*}
a\left(n\right) & \geq n-\sum_{i=0}^{mh-1}\left|\sigma^{i}\left(\symbol 0\right)\right|_{\symbol 0}\\
 & \geq n-mh\\
 & \geq n-\left\lceil \sqrt{2n/h}\right\rceil h.
\end{align*}
It is clear that $a\left(n\right)/n\leq1$ for all $n$. Thus, 
\[
\liminf_{n\rightarrow\infty}\frac{a\left(n\right)}{n}\geq\liminf_{n\rightarrow\infty}\frac{n-\bigl\lceil\sqrt{2n/h}\bigr\rceil h}{n}=1\geq\limsup_{n\rightarrow\infty}\frac{a\left(n\right)}{n},
\]
and hence the limit exists.

We next show that condition (2) implies the existence of the limit.
For the morphism $\sigma$ defined on $\Sigma_{k}$, the incidence
matrix of $\sigma$ is
\[
M_{\sigma}=\begin{pmatrix}1\\
1 & r_{1} & r_{2} & \cdots & r_{k} & r_{k+1}\\
 & 1\\
 &  & 1\\
 &  &  & \ddots\\
 &  &  &  & 1 & 1
\end{pmatrix}.
\]
Let $\lambda$ denote the Perron-Frobenius eigenvalue of $M_{\sigma}$.
Our goal is to show that $\lambda$ is simple and is the \emph{only}
eigenvalue of $M_{\sigma}$ with modulus $\lambda$. Then we can apply
Proposition \ref{prop:Saari} and Lemma \ref{lem:Prefix} to conclude
\[
\lim_{n\rightarrow\infty}\frac{a\left(n\right)}{n}=\lim_{n\rightarrow\infty}\frac{n-\left|\symbol W_{n}\right|_{\symbol 0}}{n}=1-\lim_{n\rightarrow\infty}\frac{\left|\symbol W_{n}\right|_{\symbol 0}}{n}
\]
is well-defined. To do this, we consider two cases based on the value
of $r_{k+1}$. Define $M_{1}$ and $M_{2}$ to be the two submatrices
of $M_{\sigma}$ as shown:

\[
M_{\sigma}=\left(
\begin{array}{c|ccc}
1   & 0 &   \cdots    & 0 \\ \hline
1   &   &             &   \\
\vdots  &   & M_1 &   \\
0  &   &             &  
\end{array}\right)
=\left(
\begin{array}{c|ccc|c}
1      & 0 &    \cdots    & 0 & 0 \\ \hline
1      &   &              &   & r_{k+1}  \\ 
\vdots &   & M_2          &   & \vdots  \\
0      &   &              &   & 0  \\ \hline
0      & 0 &    \cdots    & 1 & 1
\end{array}\right).
\]If $r_{k+1}\geq1$, then we show that $M_{1}$ is primitive. If $r_{k+1}=0$,
then we show that $M_{2}$ is primitive. In either case, we show that
the eigenvalues of the primitive submatrix in question are also eigenvalues
of $M_{\sigma}$. The remaining eigenvalues of $M_{\sigma}$ are then
shown to be smaller in modulus than $\lambda$, thus proving our goal.
Without loss of generality, we assume $r_{k}\neq r_{k+1}$.

\emph{Case 1: $r_{k+1}\geq1$}. Observe that $M_{1}$ is the indicence
matrix of the morphism $\sigma_{1}$ defined on $\Sigma_{k}\setminus\left\{ \symbol r\right\} $
by:
\begin{align*}
\intsymbol j & \rightarrow\intsymbol{j+1}\symbol 0^{r_{j+1}};\quad0\leq j<k\\
\intsymbol k & \rightarrow\intsymbol k\symbol 0^{r_{k+1}}.
\end{align*}
We show that $M_{1}$ is irreducible. The fact that it is primitive
then follows from the fact that it has nonzero trace; see \cite[Theorem 8.3.9]{allouche_automatic_2003}.
To show that $M_{1}$ is irreducible, it is sufficient to show that
for any $\intsymbol x,\intsymbol y\in\Sigma_{k}\setminus\left\{ \symbol r\right\} $,
there exists an integer $n$ such that $\sigma_{1}^{n}\left(\left[x\right]\right)$
contains at least one $\intsymbol y$. For such $\left[x\right],\left[y\right]$,
the word $\sigma_{1}^{k+1-x}\left(\left[x\right]\right)$ contains
at least one $\symbol 0$ since it is assumed that $r_{k+1}\geq1$.
However, the word $\sigma_{1}^{y}\left(\symbol 0\right)$ also contains
at least one $\left[y\right]$. It follows that the word $\sigma_{1}^{y+k+1-x}\left(\intsymbol x\right)$
must contain at least one $\intsymbol y$, and so $M_{1}$ is irreducible.

The matrix $M_{\sigma}$ is a reducible matrix with two irreducible
matrices along the diagonal: $M_{1}$ and a $1\times1$ matrix each
consisting of a single 1. Thus, the characteristic polynomial of $M_{\sigma}$
is
\[
p_{M_{\sigma}}\left(x\right)=\left(x-1\right)p_{M_{1}}\left(x\right).
\]
Hence, the eigenvalues of $M_{\sigma}$ are the eigenvalues of $M_{1}$
and 1.

\emph{Case} \emph{2: }$r_{k+1}=0$. In this case, $M_{2}$ is the
incidence matrix of the morphism $\sigma_{2}$ defined on $\Sigma_{k-1}\setminus\left\{ \symbol r\right\} $
by:
\begin{align*}
\intsymbol j & \rightarrow\intsymbol{j+1}\symbol 0^{r_{j+1}};\quad0\leq j<k-1\\
\intsymbol{k-1} & \rightarrow\symbol 0^{r_{k}}.
\end{align*}
The argument that $M_{2}$ is irreducible is essentially the same
as in the previous case. For $\intsymbol x,\intsymbol y\in\Sigma_{k-1}\setminus\left\{ \symbol r\right\} $,
$\sigma_{2}^{k-x}\left(\left[x\right]\right)$ contains at least one
$\symbol 0$ since $r_{k}\neq r_{k+1}=0$ and $\sigma_{2}^{y}\left(\symbol 0\right)$
contains at least one $\left[y\right]$. Thus $\sigma_{2}^{y+k-x}\left(\intsymbol x\right)$
must contain at least one $\intsymbol y$. Now one can check that
the characteristic polynomial of $M_{2}$ is equal to
\[
p_{M_{2}}\left(x\right)=x^{k}-r_{1}x^{k-1}-r_{2}x^{k-2}-\ldots-r_{k-1}x-r_{k}.
\]
Thus, the index of imprimitivity of $M_{2}$ is equal to $\gcd\left\{ j\geq1:r_{j}\neq0\right\} $.
By condition (2), however, this is equal to 1. It follows that $M_{2}$
is primitive.

When $r_{k+1}=0$, $M_{\sigma}$ has three irreducible matrices along
the diagonal: $M_{2}$ and two $1\times1$ matrices each consisting
of a single 1. Thus, the characteristic polynomial of $M_{\sigma}$
is
\[
p_{M_{\sigma}}\left(x\right)=\left(x-1\right)^{2}p_{M_{2}}\left(x\right).
\]
As in the previous case, the eigenvalues of $M_{\sigma}$ are the
eigenvalues of $M_{2}$ and 1.

In both cases, to show that $\lambda$ is a simple eigenvalue of $M_{\sigma}$
that dominates all others in modulus it remains to show that $\lambda>1$.
The characteristic polynomial of $M_{\sigma}$ is equal to
\[
p_{M_{\sigma}}\left(x\right)=\left(x-1\right)^{2}\left(x^{k}-r_{1}x^{k-1}-r_{2}x^{k-2}-\ldots-r_{k-1}x-r_{k}\right)-r_{k+1}\left(x-1\right).
\]
Without loss of generality, we may assume that $r_{1}+r_{2}+r_{3}+\ldots\geq2$
since the cases where this sum is 0 or 1 are treated separately in
condition (1). If $r_{k+1}\geq1$, then $p_{M_{\sigma}}'\left(1\right)=-r_{k+1}<0$.
If $r_{k+1}=0$, then $p_{M_{\sigma}}'\left(1\right)=0$ but 
\[
p_{M_{\sigma}}''\left(1\right)=2\left(1-r_{1}-r_{2}-\ldots-r_{k}\right)=2\left(1-r_{1}-r_{2}-\ldots\right)<0.
\]
In either case, there exists $\epsilon>0$ so that $p_{M_{\sigma}}\left(1+\epsilon\right)<0$.
Since $p_{M_{\sigma}}\left(x\right)$ is monic it must be positive
for $x$ large enough. By the intermediate value theorem, therefore,
there exists $x>1$ which is an eigenvalue of $M_{\sigma}$. It follows
that $\lambda\geq x$. 

\phantom{}

Finally, we prove the converse of the theorem, that is, we show that
the the existence of the limit implies either condition (1) or (2).
Suppose $\alpha:=\lim a\left(n\right)/n$ exists, and consider the
integer sequence
\[
R'=\left\langle s,r_{h},r_{2h},\ldots\right\rangle 
\]
where $h=\gcd\left\{ j\geq1:r_{j}\geq1\right\} $. Note that if $r_{1}+r_{2}+\ldots=0$,
then $h$ is not defined but the theorem holds trivially. Let $\gamma:=\sigma_{R'}$
(as before, we write $\sigma=\sigma_{R}$). We use the following observations:
\begin{enumerate}
\item Any symbol in $\sigma^{n}\left(\symbol 0\right)$ other than $\intsymbol k$
is congruent to $n\mod h$. In particular, $\left|\sigma^{n}\left(\symbol 0\right)\right|_{\symbol 0}=0$
if $n\not\not\equiv0\mod h$.
\item If $x\not\equiv-1\mod h$, then $r_{x+1}=0$ and so $\left|\sigma\left(\intsymbol x\right)\right|=\left|\intsymbol{x+1}\right|=1$.
It follows that for all $n\geq0$ and $0\leq j<h$,
\[
\left|\sigma^{nh+j}\left(\symbol 0\right)\right|=\left|\sigma^{nh}\left(\symbol 0\right)\right|.
\]

\item We also have that $\sigma^{nh}\left(\symbol 0\right)=\gamma^{n}\left(\symbol 0\right)$
after relabelling each symbol $\intsymbol x$ in $\sigma^{hn}\left(\symbol 0\right)$
to $\intsymbol{x/h}$. In particular, for all $n\geq0$,
\[
\left|\sigma^{nh}\left(\symbol 0\right)\right|=\left|\gamma^{n}\left(\symbol 0\right)\right|\quad\mbox{and}\quad\left|\sigma^{nh}\left(\symbol 0\right)\right|_{\symbol 0}=\left|\gamma^{n}\left(\symbol 0\right)\right|_{\symbol 0}.
\]

\item For $n\geq1$,
\begin{align*}
\sigma^{n}\left(\symbol r\right) & =\symbol{r0}\sigma\left(\symbol 0\right)\sigma^{2}\left(\symbol 0\right)\ldots\sigma^{n-1}\left(\symbol 0\right)\\
\gamma^{n}\left(\symbol r\right) & =\symbol{r0}\gamma\left(\symbol 0\right)\gamma^{2}\left(\symbol 0\right)\ldots\gamma^{n-1}\left(\symbol 0\right).
\end{align*}

\item For $n\geq0$,
\begin{align*}
\left|\gamma^{n}\left(\symbol r\right)\right| & =\left|\gamma^{n+1}\left(\symbol r\right)\right|-\left|\gamma^{n+1}\left(\symbol r\right)\right|_{\symbol 0}.
\end{align*}

\end{enumerate}
\begin{figure}
\tikzset{edge from parent/.style={draw, edge from parent path={(\tikzparentnode) -- (\tikzchildnode)}}}   
\tikzset{level distance=16pt}
\begin{tikzpicture}[nodes={inner sep=0pt,minimum height=10pt,minimum width=10pt,font=\tiny},sibling distance=3pt]

\Tree   [.\node[root]{0};
            [.\node[root]{$\tree{T}_R$}; ]
			[.\node[nonsquare]{1};
				[.\node[square]{2}; 
					[.\node[square]{3}; 
						[.\node[square]{6}; 
							[.\node[square]{9}; 
								[.\node[square]{16}; 
									[.\node[square]{23}; 
										[.\node[square]{38}; ] ] ] ] ] ]
					[.\node[nonsquare]{4}; 
						[.\node[square]{7}; 
							[.\node[square]{10}; 
								[.\node[square]{17}; 
									[.\node[square]{24}; 
										[.\node[square]{39}; ] ] ] ]
							[.\node[nonsquare]{11}; 
								[.\node[square]{18}; 
									[.\node[square]{25}; 
										[.\node[square]{40}; ] ]
									[.\node[nonsquare]{26}; 
										[.\node[square]{41}; ] ]
									[.\node[nonsquare]{27}; 
										[.\node[square]{42}; ] ] ] ]
							[.\node[nonsquare]{12}; 
								[.\node[square]{19}; 
									[.\node[square]{28}; 
										[.\node[square]{43}; ] ]
									[.\node[nonsquare]{29}; 
										[.\node[square]{44}; ] ]
									[.\node[nonsquare]{30}; 
										[.\node[square]{45}; ] ] ] ] ] ]
					[.\node[nonsquare]{5}; 
						[.\node[square]{8};
							[.\node[square]{13};
								[.\node[square]{20}; 
									[.\node[square]{31}; 
										[.\node[square]{46}; ] ] ] ]
							[.\node[nonsquare]{14};
								[.\node[square]{21}; 
									[.\node[square]{32}; 
										[.\node[square]{47}; ] ]
									[.\node[nonsquare]{33}; 
										[.\node[square]{48}; ] ]
									[.\node[nonsquare]{34}; 
										[.\node[square]{49}; ] ] ] ]
							[.\node[nonsquare]{15};
								[.\node[square]{22}; 
									[.\node[square]{35}; 
										[.\node[square]{50}; ] ]
									[.\node[nonsquare]{36}; 
										[.\node[square]{51}; ] ]
									[.\node[nonsquare]{37}; 
										[.\node[square]{52}; ] ] ] ] ] ] ] ] ]
\end{tikzpicture}\caption{The tree $\tree T_{R}$ representing $R=\left\langle 1,0,2,0,0,\ldots\right\rangle $.
In this example, $h=2$. The tree $\tree T_{R'}$ is depicted in Figure
\ref{fig:Conolly}. }
\end{figure}

\end{proof}
\begin{figure}
\includegraphics[width=1\columnwidth]{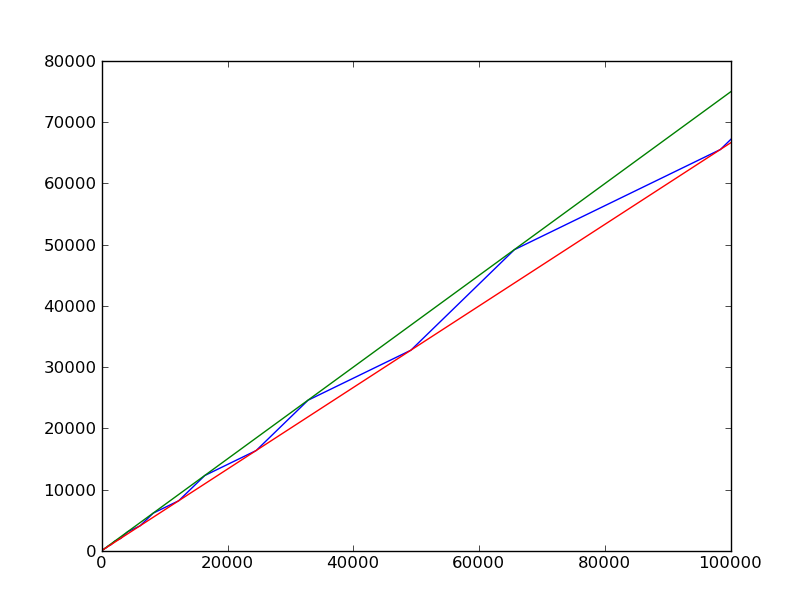}\caption{A plot of $a_{R}\left(n\right)$ when $R=\left\langle 1,0,2,0,0,\ldots\right\rangle $.
The sequence is bounded by the two lines shown, but it has no limit.}
\end{figure}

\begin{proof}
Because $\alpha$ exists, we must have that
\[
1-\alpha=\lim_{n\rightarrow\infty}\frac{\left|\sigma^{nh}\left(\symbol r\right)\right|_{\symbol 0}}{\left|\sigma^{nh}\left(\symbol r\right)\right|}=\lim_{n\rightarrow\infty}\frac{\left|\sigma^{nh+1}\left(\symbol r\right)\right|_{\symbol 0}}{\left|\sigma^{nh+1}\left(\symbol r\right)\right|}.
\]
Let $\beta=\lim_{n\rightarrow\infty}a_{R'}\left(n\right)/n$, which
exists since $\gcd\left\{ hj\geq1:r_{hj}\geq1\right\} =1$. We compute
both of these limits in terms of $\beta$ and $h$, and conclude that
at least one of $\beta,h$ must equal 1. If $\beta=1$, then $\alpha=1$
and so condition (1) holds by previous remarks. Assume therefore that
$\beta<1$.

By observations (4), (2), (3), and (4),
\begin{align*}
\left|\sigma^{nh}\left(\symbol r\right)\right|=1+\sum_{i=0}^{nh-1}\left|\sigma^{i}\left(\symbol 0\right)\right|=1+h\sum_{i=0}^{n-1}\left|\sigma^{ih}\left(\symbol 0\right)\right| & =1+h\sum_{i=0}^{n-1}\left|\gamma^{i}\left(\symbol 0\right)\right|\\
 & =1+h\left(\left|\gamma^{n}\left(\symbol r\right)\right|-1\right)
\end{align*}
and by observations (4), (1), (3), and (4),
\begin{align*}
\left|\sigma^{nh}\left(\symbol r\right)\right|_{\symbol 0}=\sum_{i=0}^{nh-1}\left|\sigma^{i}\left(\symbol 0\right)\right|_{\symbol 0}=\sum_{i=0}^{n-1}\left|\sigma^{ih}\left(\symbol 0\right)\right|_{\symbol 0} & =\sum_{i=0}^{n-1}\left|\gamma^{i}\left(\symbol 0\right)\right|_{\symbol 0}=\left|\gamma^{n}\left(\symbol r\right)\right|_{\symbol 0}.
\end{align*}
Thus
\[
\lim_{n\rightarrow\infty}\frac{\left|\sigma^{nh}\left(\symbol r\right)\right|_{\symbol 0}}{\left|\sigma^{nh}\left(\symbol r\right)\right|}=\lim_{n\rightarrow\infty}\frac{\left|\gamma^{n}\left(\symbol r\right)\right|_{\symbol 0}}{1+h\left(\left|\gamma^{n}\left(\symbol r\right)\right|-1\right)}=\frac{1-\beta}{h}.
\]
On the other hand, 
\begin{align*}
\left|\sigma^{nh+1}\left(\symbol r\right)\right| & =\left|\sigma^{nh}\left(\symbol r\right)\right|+\left|\sigma^{nh}\left(\symbol 0\right)\right| & (4)\\
 & =1+h\left(\left|\gamma^{n}\left(\symbol r\right)\right|-1\right)+\left|\gamma^{n}\left(\symbol 0\right)\right| & (3)\\
 & =1+\left(h-1\right)\left|\gamma^{n}\left(\symbol r\right)\right|-h+\left|\gamma^{n}\left(\symbol r\right)\right|+\left|\gamma^{n}\left(\symbol 0\right)\right|\\
 & =1+\left(h-1\right)\left|\gamma^{n}\left(\symbol r\right)\right|-h+\left|\gamma^{n+1}\left(\symbol r\right)\right| & (4)\\
 & =\left(h-1\right)\left(\left|\gamma^{n}\left(\symbol r\right)\right|-1\right)+\left|\gamma^{n+1}\left(\symbol r\right)\right|\\
 & =\left(h-1\right)\left(\left|\gamma^{n+1}\left(\symbol r\right)\right|-\left|\gamma^{n+1}\left(\symbol r\right)\right|_{\symbol 0}-1\right)+\left|\gamma^{n+1}\left(\symbol r\right)\right| & (5)\\
 & =h\left|\gamma^{n+1}\left(\symbol r\right)\right|-\left(h-1\right)\left(\left|\gamma^{n+1}\left(\symbol r\right)\right|_{\symbol 0}-1\right),
\end{align*}
and by observations (4), (3), and (4),
\[
\left|\sigma^{nh+1}\left(\symbol r\right)\right|_{\symbol 0}=\left|\sigma^{nh}\left(\symbol r\right)\right|_{\symbol 0}+\left|\sigma^{nh}\left(\symbol 0\right)\right|_{\symbol 0}=\left|\gamma^{n}\left(\symbol r\right)\right|_{\symbol 0}+\left|\gamma^{n}\left(\symbol 0\right)\right|_{\symbol 0}=\left|\gamma^{n+1}\left(\symbol r\right)\right|_{\symbol 0}.
\]
Therefore,
\begin{align*}
\lim_{n\rightarrow\infty}\frac{\left|\sigma^{nh+1}\left(\symbol r\right)\right|_{\symbol 0}}{\left|\sigma^{nh+1}\left(\symbol r\right)\right|} & =\lim_{n\rightarrow\infty}\left(\frac{h\left|\gamma^{n+1}\left(\symbol r\right)\right|-\left(h-1\right)\left(\left|\gamma^{n+1}\left(\symbol r\right)\right|_{\symbol 0}-1\right)}{\left|\gamma^{n+1}\left(\symbol r\right)\right|_{\symbol 0}}\right)^{-1}\\
 & =\lim_{n\rightarrow\infty}\left(\frac{h\left|\gamma^{n+1}\left(\symbol r\right)\right|}{\left|\gamma^{n+1}\left(\symbol r\right)\right|_{\symbol 0}}-\frac{\left(h-1\right)\left(\left|\gamma^{n+1}\left(\symbol r\right)\right|_{\symbol 0}-1\right)}{\left|\gamma^{n+1}\left(\symbol r\right)\right|_{\symbol 0}}\right)^{-1}\\
 & =\left(\frac{h}{1-\beta}-\left(h-1\right)\right)^{-1}\\
 & =\frac{1-\beta}{h-\left(h-1\right)\left(1-\beta\right)}.
\end{align*}
The two limits are equal, however, and since $\beta\neq1$ we must
have that the two denominators are equal, that is,
\[
h=h-\left(h-1\right)\left(1-\beta\right).
\]
But this implies $\left(h-1\right)=\beta\left(h-1\right)$, and so
$h=1$ which is precisely condition (2). 
\end{proof}

\section{Concluding Remarks}

An observation one can make in the study of nested recurrence relations
is that they tend to follow come in one of two flavours: either they
are highly chaotic and unpredictable, or they appear to have some
regular structure. Recurrences of the latter form quite often admit
some sort of combinatorial interpretation, and the ones studied in
this paper are no exception. Using our morphism and tree interpretations,
we are able to better understand the behaviour of our mysterious recurrences.
The picture remains incomplete, however, since our asymptotic analysis
only applies when $a_{R}\left(n\right)$ has only finitely many terms.
The general case remains unsolved, but we suspect that our asymptotic
results no longer hold in the general case and the situation becomes
more subtle.

An interesting open problem is to determine whether or not these recurrences
have closed forms, since to date very few nested recurrences have
been found with closed-form solutions (for some recent developments
on this front see \cite{isgur_combinatorial_2012}). Moreover, not
all morphic nested recurrences are of the form (\ref{eq:main_thm}).
Indeed, Hofstadter noted in \cite{hofstadter_douglas_r._godel_1979}
that his ``married'' functions (see \cite{stoll_hofstadters_2008})
have a morphism interpretation as well. It would be interesting to
see if Theorem \ref{thm:main} could be generalized to include mutually
defined nested recurrences. What would such morphisms look like?

\bibliographystyle{plain}
\bibliography{morphisms}

\end{document}